\tikzset{
 bus/.style={draw, circle, minimum size=2em,inner sep=0pt},
 busg/.style={draw, circle, minimum size=2em,inner sep=0pt,color=black}
}
\newtheorem{theorem}{Theorem}[section]
\newtheorem{proposition}[theorem]{Proposition}
\newtheorem{corollary}[theorem]{Corollary}
\newtheorem{definition}[theorem]{Definition}
\newtheorem{lemma}[theorem]{Lemma}
\newtheorem{assumption}[theorem]{Assumption}
\newcommand{\idxi}{i}
\newcommand{\idxj}{j}
\newcommand{\idxl}{\ell}
\newcommand{\idxm}{m}
\newcommand{\idxh}{h}
\newcommand{\idxt}{t}
\newcommand{\graph}{G}
\newcommand{\nodes}{\{1,\dots,N\}}
\newcommand{\edges}{E}
\newcommand{\nNodes}{N}
\newcommand{\nEdges}{n}
\newcommand{\inter}{S}
\newcommand{\comm}{{\texttt{cmm}}}
\newcommand{\Prob}{\mathbb{P}}
\newcommand{\state}{\mathcal{C}}
\newcommand{\nState}{C}
\newcommand{\pState}{X}
\newcommand{\oState}{x}
\newcommand{\vState}{c}
\newcommand{\test}{\mathcal{R}}
\newcommand{\nTest}{R}
\newcommand{\pTest}{Y}
\newcommand{\oTest}{y}
\newcommand{\boTest}{\bm{y}}
\newcommand{\bpTest}{\bm{Y}}
\newcommand{\vTest}{r}
\newcommand{\ro}{\gamma} 
\newcommand{\bro}{\bm{\gamma}} 
\newcommand{\Ro}{\Gamma} 
\newcommand{\tensor}{\theta}
\newcommand{\Tensor}{\Theta}
\newcommand{\btensor}{\bm{\theta}}
\newcommand{\IO}{\leftrightarrow} 
\newcommand{\ItoO}{\rightarrow} 
\newcommand{\OtoI}{\leftarrow} 
\newcommand{\LF}{L}
\def \jphml/{JPH-ML}
\def \djphml/{DJPH-ML}
\def \djphfr/{DJPH-FR}
\def \JPHNR/{JPH-NR}
\def \JPHFR/{JPH-FR}
\DeclareMathOperator*{\argmin}{argmin}
\DeclareMathOperator*{\argmax}{argmax}
\title{Interaction-Based Distributed Learning\\ in
  Cyber-Physical and Social Networks} 
\author{Francesco Sasso, Angelo Coluccia, and Giuseppe
  Notarstefano \thanks{Francesco Sasso, Angelo Coluccia and Giuseppe Notarstefano are with the Department of
Engineering, Universit\`a del Salento, via Monteroni, 73100, Lecce, Italy, \{name.lastname\}@unisalento.it. 

A very preliminary version of this work appeared as \cite{coluccia2014hierarchical} in which the high-level idea of the problem set-up was proposed for a very simplified case in which both the state and the score are binary. This result is
part of a project that has received funding from the European Research Council
(ERC) under the European Unions Horizon 2020 research and innovation
programme (grant agreement No 638992 - OPT4SMART).
}  }
\begin{document}
\bstctlcite{IEEEexample:BSTcontrol}

\maketitle

\begin{abstract}
In this paper we consider a network scenario in which agents can evaluate each
other according to a score graph that models some physical or social
interaction. The goal is to design a distributed protocol, run by the agents,
allowing them to learn their unknown state among a finite set of possible
values. We propose a Bayesian framework in which scores and states are
associated to probabilistic events with unknown parameters and hyperparameters
respectively. We prove that each agent can learn its state by means of a local
Bayesian classifier and a (centralized) Maximum-Likelihood (ML) estimator of the
parameter-hyperparameter that combines plain ML and Empirical Bayes
approaches. By using tools from graphical models, which allow us to gain insight
on conditional dependences of scores and states, we provide two relaxed
probabilistic models that ultimately lead to ML parameter-hyperparameter
estimators amenable to distributed computation. In order to highlight the
appropriateness of the proposed relaxations, we demonstrate the distributed
estimators on a machine-to-machine testing set-up for anomaly detection and on a
social interaction set-up for user profiling.	
\end{abstract}


\section{Introduction}
A common feature of modern cyber-physical and social networks is the capability
of the subsystems of interacting locally with the possibility of testing,
monitoring or simply rating the neighboring subsystems. In social networks
individuals continuously interact among themselves and (more and more often)
with cyber members by sharing contents and expressing opinions or ratings on
different topics. Similarly, in industrial (control) networks, as
power-networks, smart grids or automated factories, devices have the possibility
to test each other for physical diagnosis or to get an indication of the level
of trust, in order to prevent catastrophic faults or malware attacks.
In this paper we model a general network scenario in which nodes can mutually
rate, i.e., can give/receive a score to/from other ``neighboring'' nodes, and
aim at deciding their own (or their neighbors') state. The state may indicate a
social orientation, influencing level, or the belonging to a thematic community,
or it may characterize the level of faultiness or (mis)trust.
Due to the large-scale nature of these complex systems, centralized solutions to
classify nodes exhibit limitations both in terms of computation burden and
privacy preserving, so that distributed solutions need to be investigated.

\textit{Literature review:}
In the past few years, a great interest has been devoted to distributed estimation
schemes in which nodes aim at agreeing on a common parameter, e.g., by means of
Maximum Likelihood (ML) approaches,
\cite{barbarossa2007decentralized,schizas2008consensus,chiuso2011gossip}.
In \cite{coluccia2013distributed,coluccia2016bayesian} a more general Bayesian framework is considered,
in which nodes estimate local parameters, rather than reaching a consensus on a
common one. The estimation of the local parameters is performed by resorting to
an Empirical Bayes approach in which the parameters of the prior distribution,
called hyperparameters, are estimated through a distributed algorithm. The
estimated hyperparameters are then combined with local measurements to obtain
the Minimum Mean Square Error estimator of the local parameters.
Consensus-based algorithms have been proposed in
\cite{chiuso2011gossip,fagnani2014distributed} for the simultaneous distributed
estimation and classification of network nodes.
In \cite{montijano2015distributed} trustworthy consensus is studied, which is
able to cope with data association mistakes and measurement outliers. To this
aim, different hypotheses are generated and voted for, and nodes can change
their opinion according to a dynamic voting process.
More generally, several hypothesis testing problems have received attention in network
contexts. Differently from our set-up, these references consider a scenario in
which agents aim at learning a \emph{common} unobservable state.
In \cite{dandach2012accuracy} a group of individuals needs to decide on two
alternative hypotheses; the global decision is, however, taken by a fusion
center collecting local decisions. In the recent literature on distributed
social learning \cite{jadbabaie2012non} agents aim at learning a \emph{common}
unobservable state of the world in a finite set of possibilities, by making
repeated noisy observations. A challenge addressed in this area is the design
and analysis of non-Bayesian learning schemes in which each agent processes its
own and its neighbors' beliefs
\cite{shahrampour2013exponentially,lalitha2014social}.
More recent references investigate the effect of network size/structure and link
failures \cite{shahrampour2016distributed} or the presence of faulty nodes
\cite{su2016non} on the efficiency of the learning rules.
In \cite{molavi2016foundations} the authors consider a class of non-Bayesian
learning rules (in which agents treat the neighboring beliefs as sufficient
statistics) that are shown to take a log-linear form. It is also shown how the
long-run beliefs depend on the structure of the local rule and on the interaction
with neighbors.
In \cite{nedic2017fast} non-Bayesian learning protocols for time-varying
networks with possibly conflicting hypotheses are proposed.
An overview of recent results on distributed learning algorithms with their
convergence rates is provided in \cite{nedic2016tutorial}.
A different batch of references investigates discrete-time or continuous-time
dynamic laws describing interpersonal influences in groups of individuals and
investigate the emerging of asymptotic opinions. 
These dynamics are proven to
lead to single or multiple opinions depending on the network parameters and/or
initial conditions
\cite{mirtabatabaei2012opinion,mirtabatabaei2014reflected,friedkin2016network}.
The tutorial \cite{frasca2015distributed} reviews opinion formation in social
networks and other applications by means of randomized distributed algorithms.
The problem of self-rating in a social environment is discussed in
\cite{li2016self}, where agents can perform a predefined task, but with
different abilities. The paper presents a distributed dynamics allowing each
agent to self-rate its level of expertise/performance at the task, as a
consequence of pairwise interactions with the peers.

\textit{Statement of contributions:} The contributions of this paper are as
follows. First, we set up a learning problem in a network context in which each
node needs to classify its own local state rather than a common variable of the
surrounding world. Moreover, nodes learn their state based on observations
coming from the interaction with other nodes, rather than on measurements
collected locally from the surrounding environment. Interactions among nodes are
expressed by evaluations that a node performs on other ones, modeled through a
weighted digraph that we call score graph. This general scenario captures a wide
variety of contexts arising from social relationships as well as
machine-to-machine interactions. Motivated by these contexts, in the proposed
set-up nodes are assumed to have only a partial knowledge of the
world. Specifically, we devise a Bayesian probabilistic framework wherein,
however, both the parameters of the observation model and the hyperparameters of
the prior distribution are allowed to be unknown. In this sense, this framework
can be seen as an Empirical Bayes approach with additional unknown parameters in
the conditional distribution of the observables.
Second, in order to solve this interaction-based learning problem, we propose a
learning approach combining a local Bayesian classifier with a joint
parameter-hyperparameter Maximum Likelihood estimation approach. 
For the local Bayesian classifier, we derive a closed form expression depending
only on aggregated evaluations from the neighbors. This expression can be used
to obtain both the Maximum A Posteriori (MAP) decision as well as a ranking of
the alternatives with associated (probabilistic) trust.
We show that the ML estimator of the global parameter-hyperparameter is not
amenable to distributed computation, and is computationally intractable even for
moderately small networks. To overcome this main issue, we explore the
probabilistic structure of the problem by resorting to the conceptual tool of
graphical models,\cite{koller2009probabilistic}, to efficiently describe the
conditional dependencies. In doing so, we identify two reasonable relaxations
that lead to modified likelihood functions exhibiting a separable structure. The
resulting optimization problems for the parameter-hyperparameter estimation are,
thus, amenable to distributed computation. In particular, we propose a
node-based relaxation, for which available distributed optimization algorithms
can be used, and a full relaxation for which we propose an ad-hoc distributed
algorithm combining a local descent step with a diffusion (consensus-based)
step. 
We validate the performances of the proposed distributed estimators through
Monte Carlo simulations on two interesting scenarios, namely on anomaly
detection in cyber-physical networks and user profiling in social networks.

\textit{Organization:} In Section~\ref{sec:setup} we set-up a Bayesian framework
for interaction-based learning problems by means of a suitable graphical model,
and introduce two scenarios of interest in cyber-physical and social
networks. In Section~\ref{sec:algorithm} we derive the proposed distributed
classification algorithms based on the local Bayesian classifier and the
distributed parameter-hyperparameter estimators obtained through the two ML
relaxations. Finally, in Section~\ref{sec:simulations} we assess the
performances of the proposed schemes by means of a numerical (Monte Carlo)
analysis for the application scenarios introduced in Section~\ref{sec:setup}.

\section{Bayesian framework for interaction-based learning}\label{sec:setup}
In this section, we set up the interaction-based learning problem in which
agents of a network interact with each others according to a score graph. To
learn its own state (or a neighbor's state) each node can use observations
associated to incoming or outcoming edges. We propose a Bayesian probabilistic
model with unknown parameters, which need to be estimated to solve
the learning problem.

\subsection{Interaction network model}
We consider a \emph{network of agents} able to perform evaluations of other
agents. The result of each evaluation is a score given by the evaluating agent
on the evaluated one. Such an interaction is described by a \emph{score
  graph}. Formally, we let $\nodes$ be the set of agent identifiers and
$\graph_\inter = (\nodes,\edges_\inter)$ a digraph such that
$(\idxi,\idxj)\in \edges_\inter$ if agent $\idxi$ evaluates agent $\idxj$.  We
denote by $\nEdges$ the total number of edges in the graph, and assume that each
node has at least one incoming edge in the score graph, that is there is at
least one agent evaluating it.

We let $\state$ and $\test$ be the set of possible state and score values,
respectively. Being finite sets, we can assume
$\state = \{\vState_1,\dots,\vState_\nState\}$ and
$\test = \{\vTest_1,\dots,\vTest_\nTest\}$, where $\nState$ and $\nTest$ are the
cardinality of the two sets, respectively. Consistently, in the network we
consider the following quantities:
\begin{itemize}
\item $\oState_\idxi \in \state$, unobservable \emph{state} (or community) of agent $\idxi$;
\item $\oTest_{\idxi\idxj} \in \test$, \emph{score} (or evaluation result) of the evaluation performed by
  agent $\idxi$ on agent $\idxj$.
\end{itemize}
An example of score graph with associated state and score values is shown in
Fig.~\ref{fig:score-graph-example}.
\begin{figure}[!htpb]
\centering
\includegraphics[scale=0.8]{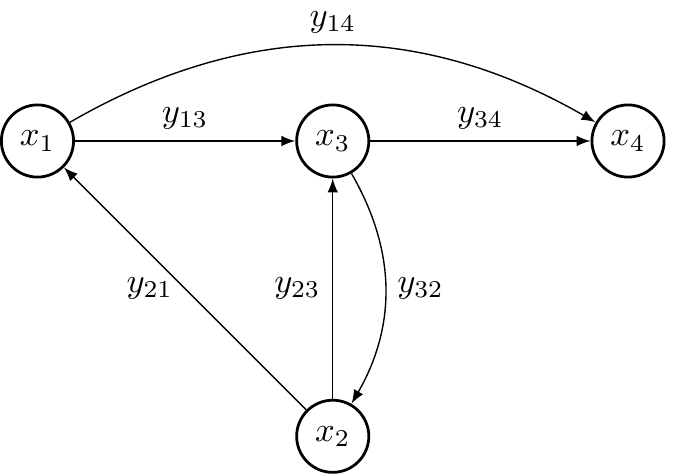}
\caption{Example of a score graph $\graph_\inter$.}
\label{fig:score-graph-example}%
\end{figure}

Besides the evaluation capability, the agents have also \emph{communication} and
\emph{computation} functionalities. That is, agents communicate according to a
time-dependent directed \emph{communication graph}
$\idxt\mapsto\graph_\comm(\idxt) = (\nodes, \edges_\comm(\idxt))$, where the
edge set $\edges_\comm(\idxt)$ describes the communication among agents:
$(\idxi,\idxj)\in\edges_\comm(\idxt)$ if agent $\idxi$ communicates to $\idxj$
at time $\idxt\in\mathbb{Z}_{\geq 0}$. We introduce the notation
$\nNodes_{\comm,\idxi}^{I}(\idxt)$ and $\nNodes_{\comm,\idxi}^{O}(\idxt)$ for the
in- and out-neighborhoods of node $\idxi$ at time $\idxt$ in the communication
graph. We will require these neighborhoods to include the node $\idxi$ itself;
formally, we have
\begin{align*}
\nNodes_{\comm,\idxi}^{I}(\idxt) &= \{\idxj : (\idxj,\idxi)\in\edges_\comm(\idxt)\}\cup\{\idxi\},\\
\nNodes_{\comm,\idxi}^{O}(\idxt) &= \{\idxj : (\idxi,\idxj)\in\edges_\comm(\idxt)\}\cup\{\idxi\}
\end{align*}

We make the following assumption
on the communication graph:

\begin{assumption}\label{ass:connectivity}
\! There exists an integer $Q\ge1$ such that the graph
  $\bigcup_{\tau=\idxt Q}^{(\idxt+1)Q-1} \!\!\graph_\comm(\tau)\!$ is strongly connected $\forall \, t\!\ge\!0$.
\end{assumption}

We point out that in general the (time-dependent) communication graph, modeling
the distributed computation, is not necessarily related to the (fixed) score
graph. We just assume that when the distributed algorithm starts each
node $i$ knows the scores received by in-neighbors in the score graph. This
could be obtained by assuming that if the distributed algorithm starts at some
time $t_0$, then for some $\bar{t}>0$, $\graph_\inter \subseteq \bigcup_{\tau= t_0 - \bar{t}}^{t_0}
\graph_\comm(\tau)$. 

\subsection{Bayesian probabilistic model}
We consider the score $\oTest_{\idxi\idxj}, (\idxi,\idxj)\in\edges_\inter$, as
the (observed) realization of a random variable denoted by
$\pTest_{\idxi\idxj}$; likewise, each state value
$\oState_{\idxi}, \idxi\in\nodes$, is the (unobserved) realization of a random
variable $\pState_\idxi$.  In order to highlight the conditional dependencies
among the random variables involved in the score graph, we resort to the tool of
graphical models and in particular of Bayesian networks,
\cite{koller2009probabilistic}. Specifically, we introduce the \emph{Score
  Bayesian Network} with $\nNodes+\nEdges$ nodes $\pState_\idxi$,
$\idxi=1,\dots,N$, and $\pTest_{\idxi\idxj}$, $(\idxi,\idxj)\in\edges_\inter$ and
$2\nEdges$ (conditional dependency) arrows defined as follows. For each
$(\idxi,\idxj)\in\edges_\inter$, we have
$\pState_\idxi\rightarrow\pTest_{\idxi\idxj}\leftarrow\pState_\idxj$ indicating
that $\pTest_{\idxi\idxj}$ conditionally depends on $\pState_\idxi$ and
$\pState_\idxj$.  In Fig.~\ref{fig:graphical-model-example} we represent the
Score Bayesian Network related to the score graph in
Fig.~\ref{fig:score-graph-example}.
\begin{figure}[!htpb]
\centering
\includegraphics[scale=0.8]{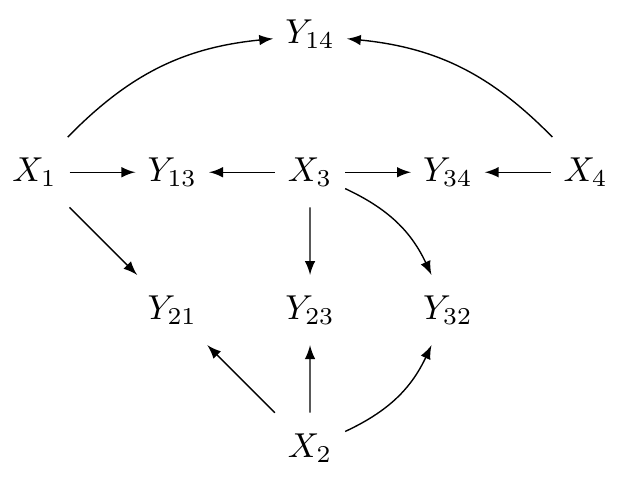}
\caption{The score Bayesian network related to the score graph in Fig. \ref{fig:score-graph-example}.}
\label{fig:graphical-model-example}%
\end{figure}

Denoting by $\bpTest_{\edges_\inter}$ the vector of all the random variables $\pTest_{\idxi\idxj},(\idxi,\idxj)\in\edges_\inter$, the joint distribution 
factorizes as
\[
\Prob(\bpTest_{\edges_\inter},\pState_1,\dots,\pState_\nNodes)\! = \!\Big(\prod_{(\idxi,\idxj)\in\edges_\inter}\!\!\!\!\Prob(\pTest_{\idxi\idxj}|\pState_\idxi,\pState_\idxj)\Big)\Big(\prod_{\idxi=1}^\nNodes\Prob(\pState_\idxi)\Big).
\]
We assume $\pTest_{\idxi\idxj}$, $(\idxi,\idxj)\in\edges_\inter$ are ruled by a
conditional probability distribution
$\Prob(\pTest_{\idxi\idxj} | \pState_\idxi,\pState_\idxj; \btensor)$, depending
on a \emph{parameter} vector $\btensor$ whose components take values in a given
set $\Tensor$. For notational purposes, we define the tensor
\begin{equation}\label{eq:p_hlm}
  p_{\idxh|\idxl,\idxm}(\btensor) := \Prob(\pTest_{\idxi\idxj} =
  \vTest_\idxh | \pState_\idxi = \vState_\idxl,\pState_\idxj = \vState_\idxm;
  \btensor),
\end{equation}
where $\vTest_\idxh\in\test$ and $\vState_\idxl, \vState_\idxm \in \state$. 
From the definition of probability distribution, we have  the constraint $\btensor\in\mathcal{S}_{\Tensor}$ with
\begin{align*}
\mathcal{S}_{\Tensor} := \Big\{\btensor\in\Tensor : p_{\idxh |
  \idxl,\idxm}(\btensor)\in[0,1], \sum_{\idxh = 1}^{\nTest} p_{\idxh
  | \idxl,\idxm}(\btensor) = 1\Big\}.
\end{align*}
To clarify the notation, an example realization of $p_{\idxh | \idxl,\idxm}(\btensor)$ for a
given $\btensor\in\mathcal{S}_{\Tensor}$, is depicted in Fig.~\ref{fig:tensor}.
\begin{figure}[!htpb]
\centering
	\includegraphics[scale=0.8]{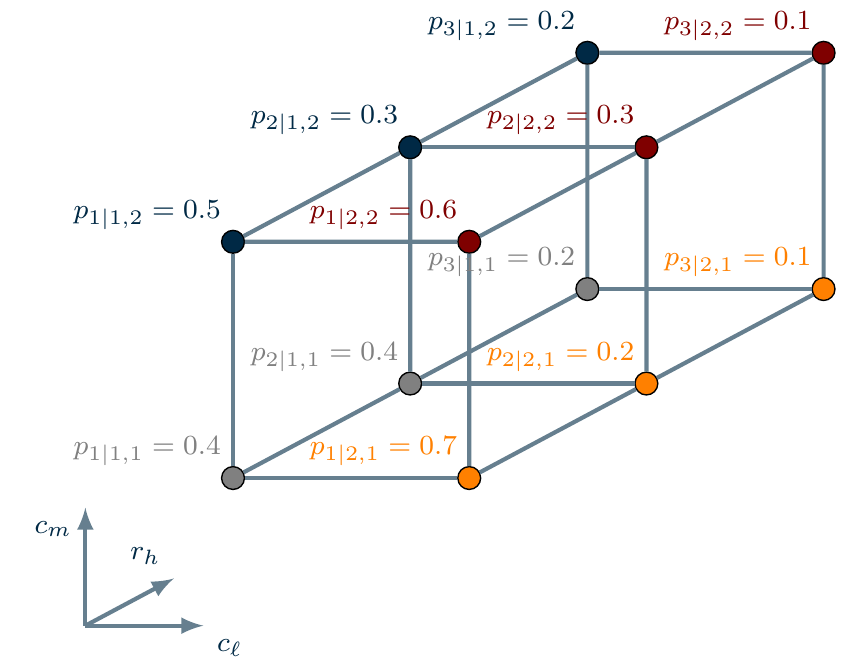}
\caption{Example of a tensor $\{p_{\idxh|\idxl,\idxm}(\btensor)\}_{\idxl,\idxm=1,2|\idxh=1,2,3}$ for fixed  $\btensor\in\mathcal{S}_{\btensor}$.
} 
\label{fig:tensor}
\end{figure}

We model $\pState_\idxi$, $\idxi=1,\dots,N$, as identically distributed random
variables ruled by a probability distribution $\Prob(\pState_\idxi ; \bro)$,
depending on a \emph{hyperparameter} vector $\bro$ whose components take
values in a given set $\Ro$. For notational purposes, we introduce
\begin{equation}\label{eq:p_l}
p_\idxl(\bro) := \Prob(\pState_\idxi = \vState_\idxl; \bro).
\end{equation}
and, analogously to $\btensor$, we have the constraint
$\bro \in \mathcal{S}_{\Ro}$ with
\[
\mathcal{S}_{\Ro} := \Big\{\bro\in\Ro : p_\idxl(\bro)
\in [0,1], \sum_{\idxl = 1}^{\nState} p_\idxl(\bro) = 1 \Big\}.
\]
We assume that $p_{\idxh | \idxl,\idxm}$ and $p_\idxl$ are continuous functions, and
that each node knows $p_{\idxh | \idxl,\idxm}, p_\idxl$ and the scores received
from its in-neighbors and given to its out-neighbors in $\graph_\inter$. 

Notice that the least structured case for the model above is
given by the \emph{categorical model} in which the vector of parameters and the
vector of hyperparameters are given by the corresponding probability
masses. That is, $\btensor$ and $\bro$ have respectively $\nTest+2\nState$ and
$\nState$ components.
We point out that the categorical model, being so unstructured, is the most
flexible one. Clearly, this flexibility is paid by a much higher number of
parameters, which quickly degenerates in over-fitting. Therefore, in practical
applications one usually exploits domain-specific knowledge to identify a
suitable parametrization in terms of the most relevant parameters and
hyperparameters. 
Some examples are discussed in the next subsection, while the
problem of jointly estimating the \emph{parameter-hyperparameter}
$(\btensor,\bro)$ will be addressed in the next section as a building block of
the (distributed) learning scheme.

\subsection{Examples of application scenarios}\label{subsec:example_scenarios}

\subsubsection{Binary-state learning for anomaly detection}\label{subsub:preparata}
We consider a network in which each node $\idxi$ tests neighboring nodes $\idxj$
with a binary outcome indicating if the tested node is deemed faulty (i.e., its
state is $\oState_\idxj = 1$) or not (i.e., $\oState_\idxj = 0$). Since each
node performing the evaluation can be itself faulty, its outcome is not always
reliable; also, no node knows whether it itself is faulty or not.
We consider a probabilistic extension of the
well-known Preparata model \cite{preparata1967connection}.
Specifically, we assume that the evaluation outcome is determined as follows: if
node $\idxi$ is working properly, then it will return the true status of the
evaluated node $\idxj$ (i.e., $\oTest_{\idxi\idxj}=1$ if node $\idxj$ is faulty
and $\oTest_{\idxi\idxj}=0$ if it is working properly); conversely, if node
$\idxi$ is faulty, the outcome is uniformly random.  Formally:
\begin{align}
\begin{split}
p_{\idxh|\idxl,\idxm} &= (1 - \vState_\idxl)\Big[(1 - \vState_\idxm)(1 -
                        \vTest_\idxh) + \vState_\idxm\vTest_\idxh\Big] +
                        \frac{1}{2}\vState_\idxl,\\
p_\idxl(\ro) &= \ro^{\vState_\idxl}(1 - \ro)^{1 -
  \vState_\idxl},\qquad\ro\in[0,1],
\end{split}
\label{eq:preparata}
\end{align}
with $\nTest = 2$ $(\vTest_1 = 0, \vTest_2 = 1)$ and $\nState = 2$ $(\vState_1 = 0, \vState_2 = 1)$.
In this first scenario, we assume for simplicity that the distribution of
evaluation results is known, so the only unknown is the hyperparameter $\ro$,
which is the \emph{a priori} probability that a node is faulty. We refer to this
model as \emph{anomaly-detection model}.

\subsubsection{Social ranking}\label{subsub:social_ranking}
Another relevant scenario is user profiling in social networks. That is, in
social relationships, people naturally tend to aggregate, tacitly or explicitly,
into groups based on some affinity. 

For example, consider an online forum on a dedicated subject, wherein each
member can express her/his preferences by assigning to posts of other
members/colleagues a score from $1$ to $\nTest$ indicating an increasing level
of appreciation for that post. 
In order to model the distribution of scores, we consider distance-based ranking
distributions, \cite{fligner1986distance}, \cite{marden1996analyzing}, in which
(ranking) probabilities decrease as far as the distance from a reference
(ranking) probability increases. 
To fit our needs, we propose for the distribution of scores the following slight variation of the
so-called Mallow's $\phi$-model (see \cite{mallows1957nonnull}):
\begin{equation}
p_{\idxh|\idxl,\idxm}(\tensor) = \frac{1}{\psi_{\idxl,\idxm}(\tensor)} e^{-\big(\frac{(\vTest_\nTest -
      \vTest_\idxh) / \vTest_\nTest - d(\vState_\idxl,\vState_\idxm) /
      \vState_\nState}{\tensor}\big)^2},
\label{eq:mallow}
\end{equation}
where $\vTest_\idxh = \idxh$ ($\idxh = 1,\dots,\nTest$), $\vState_\idxl = \idxl$
($\idxl = 1,\dots,\nState$), $\tensor\in\mathbb{R}_{>0}$ is a dispersion parameter,
$\psi_{\idxl,\idxm}(\tensor)$ is a normalizing constant, and $d$ is a
semi-distance, i.e., $d \ge 0$ and $d(\vState_\idxl,\vState_\idxm) = 0$ iff
$\vState_\idxl=\vState_\idxm$.
Informally, the ``farther'' a given community $\vState_\idxl$ is from another
community $\vState_\idxm$, the higher will be the distance
$d(\vState_\idxl,\vState_\idxm)$, and thus the lower the score.

In many cases the resulting subgroups
reflect some hierarchy in the population. Basic examples could be forums or
working teams. Thus, we consider a scenario in which each person belongs to a community
reflecting some degree of expertise about a given topic or field.  In
particular, we have $\nState$ ordered communities, with $\ell$th community given
by $\vState_\idxl = \idxl$. That is, for example, a person in the community
$\vState_1$ is an \emph{newbie}, while a person in $\vState_\nState$ is a
\emph{master}.
Since climbing in the hierarchy is typically the result of several promotion
events, a natural probabilistic model for the communities is a binomial
distribution $\mathcal{B}(\nState - 1,\ro)$, where $\ro\in[0,1]$ represents the
probability of being promoted, i.e.,
\begin{align*}
p_\idxl(\ro) &= \binom{\nState - 1}{\vState_\idxl - 1}\ro^{\vState_\idxl - 1}(1
               - \ro)^{\nState - 1 - (\vState_\idxl - 1)}.
\end{align*}
We will refer to this second set-up as  \emph{social-ranking model}.

\section{Interaction-based distributed learning}\label{sec:algorithm}
In this section we describe the proposed distributed learning scheme. 
Without loss of generality, we concentrate on a set-up in which a node wants to
self-classify. The same scheme also applies to a scenario in which a node wants
to classify its neighbors, provided it knows their given and received
scores. Notice that, in many actual contexts, as, e.g., in social network
platforms, this information is readily available.

The section is structured as follows.
First, we derive a local Bayesian classifier provided that an estimation of
parameter-hyperparameter $(\btensor,\bro)$ is available.
Then, based on a combination of plain ML and Empirical Bayes estimation
approaches, we derive a joint parameter-hyperparameter estimator. 
Finally, we propose two suitable relaxations of the Score Bayesian Network which
lead to distributed estimators, based on proper distributed
optimization algorithms.


\subsection{Bayesian classifiers (given parameter-hyperparameter)}
Each node can self-classify (i.e., learn its state) if an estimate
$(\bm{\hat{\tensor}}, \bm{\hat{\ro}})$ of
parameter-hyperparameter $(\btensor, \bro)$ is
available. Before discussing in details how this estimate can be obtained in a distributed way, we develop a \emph{decentralized} MAP self-classifier
which uses only single-hop information, i.e., the scores it gives to and receives from neighbors.

Formally, let $\bm{\oTest}_{N_\idxi}$ be the vector of (observed) scores that
agent $\idxi$ obtains by in-neighbors and provides to out-neighbors, i.e., the
stack vector of $\oTest_{\idxj\idxi}$ with $(\idxj,\idxi)\in\edges_\inter$ and
$\oTest_{\idxi\idxj}$ with $(\idxi,\idxj)\in\edges_\inter$. Consistently, let
${\bm{\pTest}\!}_{N_\idxi}$ be the corresponding random vector, then for each
agent $\idxi=1,\dots,\nNodes$, we define
\begin{equation*}
u_\idxi(\vState_\idxl) := \Prob(\pState_\idxi = \vState_\idxl |
{\bm{\pTest}\!}_{N_\idxi} = \bm{\oTest}_{N_\idxi}; \bm{\hat{\ro}},
\bm{\hat{\tensor}}), \quad \idxl = 1,\dots,\nState. 
\end{equation*}
The \emph{soft classifier} of $\idxi$ is the probability vector
$\bm{u}_\idxi := (u_\idxi(\vState_1),\dots,u_\idxi(\vState_\nState))$
(whose components are nonnegative and sum to $1$). In
Fig.~\ref{fig:soft-classifier} we depict a pie-chart representation of an example vector
$\bm{u}_\idxi$.

\begin{figure}[!htpb]
\centering
	\includegraphics[scale=0.8]{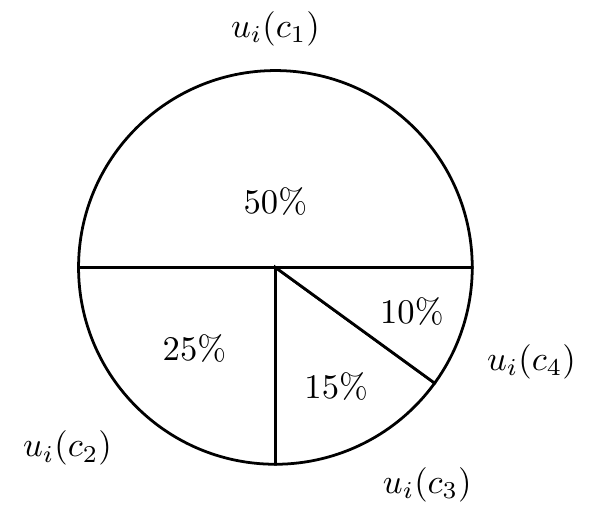}
\caption{Example of outcome of the soft classifier of an agent $i$, for $\nState = 4$: $\bm{u}_\idxi = (0.5,0.25,0.15,0.1)$.}
\label{fig:soft-classifier}
\end{figure}

From the soft classifier we can define the classical \emph{Maximum A-Posteriori
  probability (MAP) classifier} as the argument corresponding to the maximum
component of $u_\idxi$, i.e.,
\begin{equation*}
\hat{\oState}_\idxi := \argmax_{\vState_\idxl\in\state} u_\idxi(\vState_\idxl).
\end{equation*}

The main result here is to show how to efficiently compute the soft and MAP classifiers. First,
we define
\begin{align*}
\nNodes_\idxi^{\IO}&:=\{\idxj:(\idxj,\idxi)\in\edges_\inter,(\idxi,\idxj)\in\edges_\inter\},\\
\nNodes_\idxi^{\OtoI}&:=\{\idxj:(\idxj,\idxi)\in\edges_\inter,(\idxi,\idxj)\notin\edges_\inter\},\\
\nNodes_\idxi^{\ItoO}&:=\{\idxj:(\idxi,\idxj)\in\edges_\inter, (\idxj,\idxi)\notin\edges_\inter\},
\end{align*}
and for each $h,k=1,\dots,R$ we introduce the quantities:
\begin{align*}
n_\idxi^{\IO}(h,k) &:= |\{j\in\nNodes_\idxi^{\IO}: y_{ij}=r_h, y_{ji}=r_k\}|,\\
n_\idxi^{\OtoI}(h) &:= |\{j\in\nNodes_\idxi^{\OtoI}: y_{ji}=r_h\}|,\\
n_\idxi^{\ItoO}(h) &:= |\{j\in\nNodes_\idxi^{\ItoO}: y_{ij}=r_h\}|.
\end{align*}

\begin{theorem}\label{thm:MAP_characterization}
  Let $\idxi\in\nodes$ be an agent of the score graph. Then, the
  components of the vector $u_\idxi$ are given by 
\begin{equation*}
    u_{\idxi}(\vState_\idxl) = \frac{v_{\idxi}(\vState_\idxl)}{Z_{\idxi}}
\end{equation*}
where $Z_{\idxi} = \sum_{\ell = 1}^C v_{\idxi}(\vState_\idxl)$ is a normalizing
constant, and
$v_{\idxi}(\vState_\idxl) =
p_\idxl(\bm{\hat{\ro}})\pi_\idxi^{\IO}(c_\idxl)\pi_\idxi^\OtoI(c_\idxl)\pi_\idxi^\ItoO(c_\idxl)$
with
\begin{align*}
\pi_\idxi^{\IO}(c_\idxl) &= \prod_{h,k=1}^C\Big(\sum_{m=1}^C p_{k|\idxm,\idxl}(\bm{\hat{\tensor}})p_{h|\idxl,\idxm}(\bm{\hat{\tensor}})p_{\idxm}(\bm{\hat{\ro}})\Big)^{n_\idxi^{\IO}(h,k)},\\
\pi_\idxi^\OtoI(c_\idxl) &= \prod_{h = 1}^R\Big(\sum_{\idxm=1}^C p_{h|\idxm,\idxl}(\bm{\hat{\tensor}})p_\idxm(\bm{\hat{\ro}})\Big)^{n_\idxi^\OtoI(h)},\\
\pi_\idxi^\ItoO(c_\idxl) &= \prod_{h = 1}^R\Big(\sum_{\idxm=1}^C p_{h|\idxl,\idxm}(\bm{\hat{\tensor}})p_\idxm(\bm{\hat{\ro}})\Big)^{n_\idxi^\ItoO(h)}.
\end{align*}
\end{theorem}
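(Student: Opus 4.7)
The plan is a direct application of Bayes' rule followed by exploitation of the conditional independences encoded in the Score Bayesian Network. First I would write
\[
u_\idxi(\vState_\idxl) \;=\; \frac{p_\idxl(\bm{\hat{\ro}})\,\Prob\!\left(\bm{\pTest}_{N_\idxi} = \bm{\oTest}_{N_\idxi}\,\big|\,\pState_\idxi = \vState_\idxl;\bm{\hat{\tensor}},\bm{\hat{\ro}}\right)}{Z_\idxi},
\]
so the prior contribution $p_\idxl(\bm{\hat{\ro}})$ is immediate and the remaining task is to evaluate the conditional likelihood $\Prob(\bm{\pTest}_{N_\idxi}\mid \pState_\idxi = \vState_\idxl)$, the normalizer then being $Z_\idxi = \sum_\idxl v_\idxi(\vState_\idxl)$ by the law of total probability.

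The central step is to factor this conditional likelihood into a product over the three types of neighbors $\nNodes_\idxi^{\IO}, \nNodes_\idxi^{\OtoI}, \nNodes_\idxi^{\ItoO}$. In the Score Bayesian Network, each $\pTest_{\idxi\idxj}$ (resp.\ $\pTest_{\idxj\idxi}$) has only $\pState_\idxi$ and $\pState_\idxj$ as parents; the hidden states $\{\pState_\idxj\}_{\idxj\neq\idxi}$ are mutually independent and independent of $\pState_\idxi$; and for distinct neighbors $\idxj\neq\idxj'$, the only common ancestor of the groups $\{\pTest_{\idxi\idxj},\pTest_{\idxj\idxi}\}$ and $\{\pTest_{\idxi\idxj'},\pTest_{\idxj'\idxi}\}$ is $\pState_\idxi$. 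Conditioning on $\pState_\idxi = \vState_\idxl$ therefore d-separates the per-neighbor groups and yields
\[
\Prob\!\left(\bm{\pTest}_{N_\idxi}\,\big|\,\pState_\idxi=\vState_\idxl\right) = \!\!\prod_{\idxj\in\nNodes_\idxi^{\IO}}\!\!\!\Prob(\pTest_{\idxi\idxj},\pTest_{\idxj\idxi}\,|\,\pState_\idxi)\!\prod_{\idxj\in\nNodes_\idxi^{\OtoI}}\!\!\!\Prob(\pTest_{\idxj\idxi}\,|\,\pState_\idxi)\!\prod_{\idxj\in\nNodes_\idxi^{\ItoO}}\!\!\!\Prob(\pTest_{\idxi\idxj}\,|\,\pState_\idxi).
\]

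From here the computation is routine bookkeeping: for each neighbor type I marginalize over $\pState_\idxj$ using $\Prob(\pState_\idxj=\vState_\idxm)=p_\idxm(\bm{\hat{\ro}})$, invoke conditional independence of $\pTest_{\idxi\idxj}$ and $\pTest_{\idxj\idxi}$ given the pair $(\pState_\idxi,\pState_\idxj)$ (both are children of the same parent set with no shared descendants), and substitute the tensor $p_{\cdot|\cdot,\cdot}(\bm{\hat{\tensor}})$. For $\idxj\in\nNodes_\idxi^{\IO}$ with $\oTest_{\idxi\idxj}=\vTest_\idxh,\,\oTest_{\idxj\idxi}=\vTest_k$ this produces $\sum_\idxm p_{\idxh|\idxl,\idxm}p_{k|\idxm,\idxl}p_\idxm$; for $\idxj\in\nNodes_\idxi^{\OtoI}$ with $\oTest_{\idxj\idxi}=\vTest_\idxh$ it gives $\sum_\idxm p_{\idxh|\idxm,\idxl}p_\idxm$; and for $\idxj\in\nNodes_\idxi^{\ItoO}$ with $\oTest_{\idxi\idxj}=\vTest_\idxh$ it gives $\sum_\idxm p_{\idxh|\idxl,\idxm}p_\idxm$. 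Grouping identical factors by their common pattern and invoking the counts $n_\idxi^{\IO}(h,k),\,n_\idxi^{\OtoI}(h),\,n_\idxi^{\ItoO}(h)$ assembles exactly the three products $\pi_\idxi^{\IO}(\vState_\idxl),\pi_\idxi^{\OtoI}(\vState_\idxl),\pi_\idxi^{\ItoO}(\vState_\idxl)$.

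The main obstacle I anticipate is the factorization step: one must be explicit that \emph{no} $\pState_\idxj$ with $\idxj\neq\idxi$ is being conditioned upon, so that it is really the marginal independence of the prior states $\{\pState_\idxj\}_{\idxj\neq\idxi}$ (rather than a naive ``conditional independence given the parents'' at the individual-score level) that makes the d-separation argument valid after marginalization. Once this factorization is justified, the remaining manipulations are straightforward algebraic rewriting using the tensor $p_{\idxh|\idxl,\idxm}(\bm{\hat{\tensor}})$ and the prior masses $p_\idxm(\bm{\hat{\ro}})$.
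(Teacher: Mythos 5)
Your proposal is correct and follows essentially the same route as the paper: Bayes' rule, a d-separation argument showing that conditioning on $\pState_\idxi$ decouples the score variables attached to node $\idxi$ (the paper isolates this as a standalone lemma on pairwise conditional independence of trails blocked at $\pState_\idxi$ or at unobserved common effects), followed by marginalization over each neighbor's state and aggregation via the counts $n_\idxi^{\IO}(h,k)$, $n_\idxi^{\OtoI}(h)$, $n_\idxi^{\ItoO}(h)$. The subtlety you flag about not conditioning on the $\pState_\idxj$, $\idxj\neq\idxi$, is exactly the point the paper's lemma is designed to handle.
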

The proof is given in Appendix~\ref{app:MAP_characterization}.

\begin{corollary}
If the score graph is undirected, then we have 
\[
v_{\idxi}(\vState_\idxl) = p_\idxl(\bm{\hat{\ro}})\pi_\idxi^{\IO}(c_\idxl).
\]
\end{corollary}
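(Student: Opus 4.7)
My plan is to derive the corollary as an immediate specialization of Theorem~\ref{thm:MAP_characterization}, by showing that when $\graph_\inter$ is undirected the two ``one-way'' contributions $\pi_\idxi^{\OtoI}(\vState_\idxl)$ and $\pi_\idxi^{\ItoO}(\vState_\idxl)$ collapse to $1$. The first step is to read off from the definitions that $\nNodes_\idxi^{\OtoI}$ consists of those $\idxj$ with $(\idxj,\idxi)\in\edges_\inter$ but $(\idxi,\idxj)\notin\edges_\inter$, and symmetrically for $\nNodes_\idxi^{\ItoO}$. In an undirected score graph, the edge set is symmetric, i.e.\ $(\idxj,\idxi)\in\edges_\inter$ if and only if $(\idxi,\idxj)\in\edges_\inter$, so both sets are empty.

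Consequently, the counters $n_\idxi^{\OtoI}(h)$ and $n_\idxi^{\ItoO}(h)$ are identically zero for every $h=1,\dots,\nTest$. Plugging these zero exponents into the product expressions for $\pi_\idxi^{\OtoI}(\vState_\idxl)$ and $\pi_\idxi^{\ItoO}(\vState_\idxl)$ given in Theorem~\ref{thm:MAP_characterization} yields an empty product, hence both quantities equal $1$ for every $\vState_\idxl\in\state$. Substituting this back into the factorization
\[
v_{\idxi}(\vState_\idxl) = p_\idxl(\bm{\hat{\ro}})\,\pi_\idxi^{\IO}(\vState_\idxl)\,\pi_\idxi^{\OtoI}(\vState_\idxl)\,\pi_\idxi^{\ItoO}(\vState_\idxl)
\]
gives the claimed identity $v_{\idxi}(\vState_\idxl) = p_\idxl(\bm{\hat{\ro}})\,\pi_\idxi^{\IO}(\vState_\idxl)$.

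There is essentially no obstacle: the result is a bookkeeping corollary of the theorem, so the only care needed is to verify that the convention for empty products is $1$ (consistent with viewing $\pi_\idxi^{\OtoI}$ and $\pi_\idxi^{\ItoO}$ as products indexed by $h=1,\dots,\nTest$ each with vanishing exponent). No further simplification of $\pi_\idxi^{\IO}(\vState_\idxl)$ is required, and the normalizing constant $Z_\idxi$ in the statement of Theorem~\ref{thm:MAP_characterization} is unaffected by this simplification since it is defined as $\sum_{\idxl} v_\idxi(\vState_\idxl)$.
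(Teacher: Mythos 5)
Your argument is correct and is exactly the intended one: the paper states this corollary without proof precisely because, for an undirected score graph, the sets $\nNodes_\idxi^{\OtoI}$ and $\nNodes_\idxi^{\ItoO}$ are empty, so $n_\idxi^{\OtoI}(h)=n_\idxi^{\ItoO}(h)=0$ for all $h$ and the corresponding factors $\pi_\idxi^{\OtoI}(\vState_\idxl)$ and $\pi_\idxi^{\ItoO}(\vState_\idxl)$ reduce to $1$. Your bookkeeping (including the empty-product convention and the observation that $Z_\idxi$ is unaffected) matches the specialization of Theorem~\ref{thm:MAP_characterization} that the authors have in mind.
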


\subsection{Joint Parameter-Hyperparameter ML estimation (JPH-ML)}
Classification requires that at each node an estimate
$(\bm{\hat{\tensor}}, \bm{\hat{\ro}})$ of
parameter-hyperparameter $(\btensor, \bro)$ is available. 

On this regard, a few remarks about $\btensor$ and $\bro$ are now in
order. Depending on both the application and the network context, these
parameters may be known, or (partially) unknown to the nodes. If both of them
are known, we are in a pure Bayesian set-up in which, as just shown, each node
can independently self-classify with no need of cooperation. The case of unknown
$\btensor$ (and known $\bro$) falls into a Maximum-Likelihood framework, while
the case of unknown $\bro$ (and known $\btensor$) can be addressed by an
\emph{Empirical Bayes} approach.
In this paper we consider a general scenario in which both of them
can be unknown.
Our goal is then to compute, in a distributed way, an estimate of
\emph{parameter-hyperparameter} $(\btensor,\bro)$ and use it for the
classification at each node.

In the following we show how to compute it in a distributed way by following a mixed
Empirical Bayes and Maximum Likelihood approach.
We define the \emph{Joint Parameter-Hyperparameter Maximum Likelihood (\jphml/)
  estimator} as 
\begin{equation}
\label{eq:MLE}
(\bm{\hat{\tensor}}_{\text{\tiny ML}}, \bm{\hat{\ro}}_{\text{\tiny ML}}) :=
\argmax_{(\btensor, \bro)\in
  \mathcal{S}_{\Tensor}\times\mathcal{S}_{\Ro}}
\LF({\bm{\oTest}\!}_{\edges_\inter};\btensor, \bro)
\end{equation}
where ${\bm{\oTest}\!}_{\edges_\inter}$ is the vector of all
scores $\oTest_{\idxj\idxi}, (\idxj,\idxi)\in\edges_\inter$, and 
\begin{equation}
\label{eq:likelihood}
\LF({\bm{\oTest}\!}_{\edges_\inter};\btensor, \bro) =
\Prob({\bm{\pTest}\!}_{\edges_\inter} = {\bm{\oTest}\!}_{\edges_\inter} \, ; \, \btensor, \bro)
\end{equation}
is the \emph{likelihood function}. 

Notice that, while $\btensor$ is directly linked to the observables
${\bm{\oTest}\!}_{\edges_\inter}$, the hyperparameter $\bro$ is related to the
unobservable states. While one could readily obtain the likelihood
function for the sole estimation of $\btensor$ from the distribution of scores,
the presence of $\bro$ requires to marginalize over all unobservable state
(random) variables.
Thus, by using the law of total probability
\begin{align}
\begin{split}
&\LF({\bm{\oTest}\!}_{\edges_\inter};\btensor, \bro) = \\
&\hspace{5pt}\sum_{\idxl_1=1}^\nState\! \cdots\! \sum_{\idxl_\nNodes = 1}^\nState
      \!\Prob({\bpTest\!}_{\edges_\inter}\!=\! {\boTest\!}_{\edges_\inter},
       \pState_{1}\!=\!\vState_{\idxl_1},\dots,\pState_{\nNodes}\!=\!\vState_{\idxl_\nNodes}).
\end{split}
\label{eq:LF_deriv_1}
\end{align}
Indicating with $\nNodes^{I}_{\idxi}$ the set of in-neighbors of agent $\idxi$
in the score graph (we are assuming that it is non-empty), the probability in
\eqref{eq:LF_deriv_1} can be written as the product of
the conditional probability of scores, i.e.,
\begin{align*}
  \Prob({\bpTest\!}_{\edges_\inter}= {\boTest\!}_{\edges_\inter}\,|\,
          &\pState_{1}=\vState_{\idxl_1},\dots, \pState_{\nNodes}=\vState_{\idxl_\nNodes})
          = \\
  &\prod_{\idxi =
  1}^\nNodes\prod_{\idxj\in\nNodes^{I}_{\idxi}} \Prob(\pTest_{\idxj\idxi} =
  \oTest_{\idxj\idxi}| \pState_{\idxj} = \vState_{\idxl_\idxj},
  \pState_{\idxi} = \vState_{\idxl_\idxi})  
\end{align*}
multiplied by the prior probability of states, i.e.,
\begin{align*}
\Prob(\pState_{1}=\vState_{\idxl_1},\dots, \pState_{\nNodes}=\vState_{\idxl_\nNodes})
  = \prod_{\idxi=1}^{\nNodes}\Prob(\pState_{\idxi}=\vState_{\idxl_\idxi}).
\end{align*}
Thus, the likelihood function turns out to be
\begin{align*}
  \LF({\bm{\oTest}\!}_{\edges_\inter};\btensor, \bro)
\!=\! \sum_{\idxl_1=1}^\nState \!\cdots \!\sum_{\idxl_\nNodes =
  1}^\nState\prod_{\idxi=1}^\nNodes
p_{\idxl_\idxi}(\bro)\prod_{\idxj\in\nNodes^{I}_{\idxi}}p_{h_{ji}\,|\,
  \idxl_\idxi,\idxl_\idxj }(\btensor)
\end{align*}
where $\idxh_{\idxj\idxi}$ is the index of the score element
$\vTest_{\idxh_{\idxj\idxi}} \in \test=\{\vTest_1,\ldots,\vTest_\nTest\}$ associated to the score
$\oTest_{\idxj\idxi}$, i.e., $\oTest_{\idxj\idxi}= \vTest_{\idxh_{\idxj\idxi}}$.

\subsection{Distributed JPH Node-based Relaxed estimation (JPH-NR)}
From the equations above it is apparent that the likelihood function couples the
information at all nodes, so problem~\eqref{eq:MLE} is not amenable to
distributed solution.  To make it distributable, we propose a relaxation
approach: in particular, we seek for a minimal relaxation in terms of
dependencies of the joint probabilities that results in a separable structure
of the likelihood function.  To this aim we introduce, instead of
$\LF({\bm{\oTest}\!}_{\edges_\inter};\btensor, \bro)$, a \emph{Node-based
  Relaxed (NR) likelihood}
$\LF_{NR}({\bm{\oTest}\!}_{\edges_\inter};\btensor, \bro)$.
Let $\bm{\oTest}_{N_\idxi^I}$ be the vector of (observed) scores that agent
$\idxi$ obtains by in-neighbors and ${\bm{\pTest}\!}_{N_\idxi^I}$ the
corresponding random vector. Then,
\begin{align}
 \LF_{NR}({\bm{\oTest}\!}_{\edges_\inter};\btensor, \bro)
:= \prod_{\idxi = 1}^\nNodes \Prob({\bm{\pTest}\!}_{N_\idxi^I} = {\bm{\oTest}\!}_{N_\idxi^I} \, ; \, \btensor,\bro).
\end{align}
This relaxation can be interpreted as follows. We pretend that each node has a
virtual state, independent from its true state, every time it evaluates another
node. Thus, in the Score Bayesian Network, besides the state variables
$\pState_\idxi$, $\idxi = 1,\dots,N$, there will be additional variables
$\pState_\idxi^{\rightarrow \idxj}$ for each $j$ with $(i,j)\in\edges_\inter$.
To clarify this model,
Figs.~\ref{fig:score-graph-middle}-\ref{fig:graphical-model-middle} depict the
node-based relaxed graph and the corresponding graphical model for the same example
given in Figs.~\ref{fig:score-graph-example}-\ref{fig:graphical-model-example}.

\begin{figure}[!htpb]
	\centering
	\includegraphics{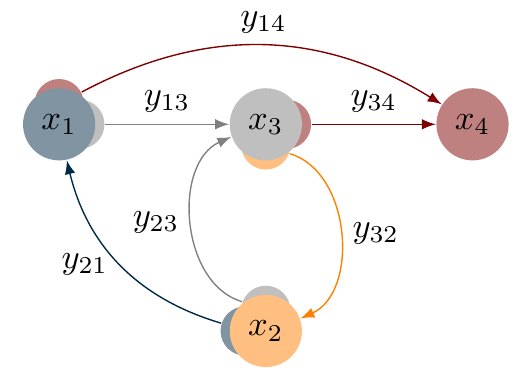}
	\caption{Node-based relaxation of the score graph in Fig. \ref{fig:score-graph-example}, with virtual nodes indicating the virtual states of each node.}
		\label{fig:score-graph-middle}
\end{figure}
\begin{figure}[!htpb]
	\centering
	\includegraphics{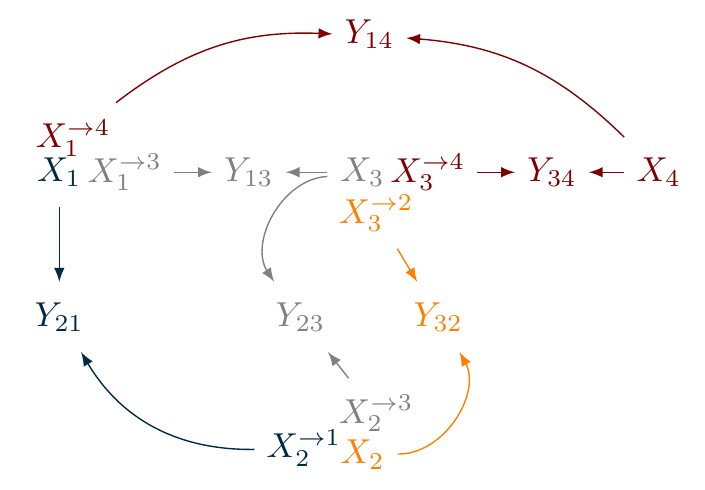}
	\caption{Node-based relaxation of the score Bayesian network of Fig. \ref{fig:score-graph-middle}.}
		\label{fig:graphical-model-middle}
\end{figure}

Since ${\bm{\pTest}\!}_{N_\idxi^I}$, $\idxi=1,\dots,N$, are not independent, then
clearly $\LF\neq\LF_{NR}$. 
However, as it will appear from numerical performance assessment, reported in
the Section~\ref{sec:simulations}, this choice yields reasonably small
estimation errors.

Using this virtual independence between ${\bm{\pTest}\!}_{N_\idxi^I}$, with
$\idxi=1,\dots,N$, we define the \emph{JPH-NR estimator} as
\begin{equation}
\label{eq:MLE-NR}
(\bm{\hat{\tensor}}_{\text{\tiny NR}}, \bm{\hat{\ro}}_{\text{\tiny NR}}) :=
\argmax_{(\btensor, \bro)\in
  \mathcal{S}_{\Tensor}\times\mathcal{S}_{\Ro}}
\LF_{NR}({\bm{\oTest}\!}_{\edges_\inter};\btensor, \bro).
\end{equation}

The next result characterizes the structure of \JPHNR/ \eqref{eq:MLE-NR}.  
\begin{proposition}
\label{prop:loglike_NR}
The \JPHNR/ estimator based on the node-based relaxation of the
score Bayesian network is given by
\begin{equation}
\label{eq:NRopt}
(\bm{\hat{\tensor}}_{\text{\tiny NR}}, \bm{\hat{\ro}}_{\text{\tiny NR}}) =
\argmax_{(\btensor, \bro)\in
  \mathcal{S}_{\Tensor}\times\mathcal{S}_{\Ro}}
\sum_{\idxi=1}^{\nNodes} g(\btensor,\bro;\bm{\nEdges}_i)
\end{equation}
with $\bm{\nEdges}_i \!=\! [\nEdges_\idxi^{(1)} \cdots
\nEdges_\idxi^{(\nTest)}]^\top$,   $\nEdges_\idxi^{(\idxh)} \!:=\! |\{\idxj\in
N^I_\idxi: y_{\idxj\idxi} = r_h\}|$, and
\begin{align}\label{eq:g}
g(\btensor,\bro;\bm{n}_i) &=\nonumber\\ 
\log\Big(\sum_{\idxl = 1}^C&  p_\idxl(\bro)\prod_{h =
  1}^R \Big(\sum_{m=1}^C p_{h \,|\, m, \idxl}(\btensor) p_m(\bro)\Big)^{n_i^{(h)}}\Big).
\end{align}
\end{proposition}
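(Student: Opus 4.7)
The plan is to unfold the factor $\Prob(\bm{Y}_{N_i^I}=\bm{y}_{N_i^I};\btensor,\bro)$ that appears in the definition of $\LF_{NR}$, show that each such factor equals the expression inside the logarithm in \eqref{eq:g}, and then take logarithms and invoke monotonicity of $\log$ to conclude that the argmax is preserved. Because the node-based relaxation introduces an independent virtual state $X_j^{\rightarrow i}$ for every evaluator $j$ of $i$, the marginalization over unobservable states factorizes over the in-neighbors of each fixed node $i$; this is exactly the structural property that makes the likelihood separable across $i$.

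First, I would fix $i$ and compute $\Prob(\bm{Y}_{N_i^I}=\bm{y}_{N_i^I};\btensor,\bro)$ by conditioning on $X_i$ and then on each virtual state $X_j^{\rightarrow i}$ for $j\in N_i^I$. Using the factorization of the relaxed Bayesian network,
\begin{align*}
\Prob(\bm{Y}_{N_i^I}=\bm{y}_{N_i^I};\btensor,\bro)
&= \sum_{\ell=1}^{C} p_\ell(\bro)\prod_{j\in N_i^I}\Prob(Y_{ji}=y_{ji}\mid X_i=c_\ell;\btensor,\bro)\\
&= \sum_{\ell=1}^{C} p_\ell(\bro)\prod_{j\in N_i^I}\sum_{m=1}^{C} p_{h_{ji}\mid m,\ell}(\btensor)\,p_m(\bro),
\end{align*}
where in the last step I marginalize over the virtual state $X_j^{\rightarrow i}$, which in the relaxed model is independent of everything else and distributed as $p_m(\bro)$.

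Next I would collapse the product over $j\in N_i^I$ to a product over score values. Since the inner sum $\sum_m p_{h\mid m,\ell}(\btensor)p_m(\bro)$ depends on $j$ only through the index $h_{ji}$ of the observed score $y_{ji}$, grouping neighbors by the value of their score gives
\[
\prod_{j\in N_i^I}\sum_{m=1}^{C} p_{h_{ji}\mid m,\ell}(\btensor)\,p_m(\bro)
= \prod_{h=1}^{R}\Big(\sum_{m=1}^{C} p_{h\mid m,\ell}(\btensor)\,p_m(\bro)\Big)^{n_i^{(h)}},
\]
with $n_i^{(h)}=|\{j\in N_i^I : y_{ji}=r_h\}|$ as defined in the statement. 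Substituting back yields $\Prob(\bm{Y}_{N_i^I}=\bm{y}_{N_i^I};\btensor,\bro)=\exp\big(g(\btensor,\bro;\bm{n}_i)\big)$.

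Finally, I would take the logarithm of $\LF_{NR}$, which turns the product over $i$ in its definition into the sum $\sum_{i=1}^N g(\btensor,\bro;\bm{n}_i)$, and note that since $\log$ is strictly increasing on $\mathbb{R}_{>0}$ the argmax in \eqref{eq:MLE-NR} coincides with the argmax in \eqref{eq:NRopt} over the same feasible set $\mathcal{S}_\Theta\times\mathcal{S}_\Gamma$. The only nontrivial step is the combinatorial one: properly justifying the grouping of neighbors by score value, which requires observing that the summand depends on $j$ solely through $h_{ji}$; once this is established, the rest is straightforward bookkeeping.
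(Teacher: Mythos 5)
Your proposal is correct and follows essentially the same route as the paper's proof: condition on $X_\idxi$, factor the incoming scores over $\idxj\in N_\idxi^I$, marginalize over the evaluator's state to obtain $\sum_{m}p_{h_{ji}\,|\,m,\ell}(\btensor)p_m(\bro)$, group neighbors by score value to get the exponents $n_i^{(h)}$, and conclude by monotonicity of the logarithm. The only cosmetic difference is that you justify the conditional independence of the incoming scores given $X_\idxi$ via the independent virtual states of the relaxed network, whereas the paper derives the same factorization from a $d$-separation argument (its Lemma~\ref{lem:independency}) in the original Score Bayesian Network; both yield the same value for each factor.
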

The proof is given in Appendix \ref{app:loglike_NR}.

Proposition~\ref{prop:loglike_NR} ensures that the \JPHNR/ estimator can be
computed by solving an optimization problem that has a separable cost (i.e., the
sum of $N$ local costs). Available distributed optimization algorithms for
asynchronous networks can be adopted to this aim, e.g.
\cite{carli2015analysis}, \cite{nedic2015distributed}, \cite{di2016next}.

\subsection{Distributed JPH Fully-Relaxed estimation (JPH-FR)}
Although \JPHNR/ estimator is a viable solution for which we will report simulation results
later in the paper, we consider a stronger relaxation, which gives rise to a
more convenient distributed algorithm consisting of a linear (consensus-like)
averaging process and a purely local optimization step.

Thus, we introduce the \emph{Fully Relaxed (FR) likelihood}:
\begin{align}\label{eq:L_FR}
 \LF_{FR}({\bm{\oTest}\!}_{\edges_\inter};\btensor, \bro)
:= \prod_{(\idxi,\idxj)\in\edges_\inter} \Prob(\pTest_{\idxi\idxj} = \oTest_{\idxi\idxj}\, ; \,\btensor,\bro)
\end{align}
where all dependencies among the variables
$\pTest_{\idxi\idxj},(\idxi,\idxj)\in\edges_\inter$ are neglected.
Accordingly, the \emph{JPH-FR estimator} is given by
\begin{equation}
\label{eq:MLE-FR}
(\bm{\hat{\tensor}}_{\text{\tiny FR}}, \bm{\hat{\ro}}_{\text{\tiny FR}}) :=
\argmax_{(\btensor, \bro)\in
  \mathcal{S}_{\Tensor}\times\mathcal{S}_{\Ro}}
\LF_{FR}({\bm{\oTest}\!}_{\edges_\inter};\btensor, \bro).
\end{equation}

The following proposition exposes the structure of  \eqref{eq:MLE-FR}.
\begin{proposition} 
\label{prop:loglike_FR}
The \JPHFR/ estimator based on the full relaxation of the score Bayesian network is given by
\begin{align}\label{eq:optimization_problem}
(\bm{\hat{\tensor}}_{\text{\tiny FR}}, \bm{\hat{\ro}}_{\text{\tiny FR}}) =
\argmin_{(\btensor, \bro)\in
  \mathcal{S}_{\Tensor}\times\mathcal{S}_{\Ro}} \bm{\phi}^\top\bm{g}(\btensor, \bro)
\end{align}
where 
$
\bm{g} := (g_{(1)},\dots,g_{(\nTest)}),\quad\bm{\phi} := (\phi^{(1)},\dots,\phi^{(\nTest)}),
$
and
\begin{align*}
  g_{(h)}(\btensor, \bro) &:=
                                                    -\log\Big(\sum_{\idxl,\idxm=
                                                    1}^\nState p_{\idxh|\idxl,\idxm}(\btensor)
                                                    p_{\idxl}(\bro)p_{\idxm}(\bro)\Big),\\
  \phi^{(\idxh)} &:= \sum_{\idxi = 1}^\nNodes \frac{\nEdges_\idxi^{(\idxh)}}{\nEdges}, \quad \idxh = 1,\dots,\nTest
\end{align*} 
\end{proposition}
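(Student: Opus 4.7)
The plan is to proceed directly from the definition of $\LF_{FR}$ by taking logarithms and using the factorization from the Score Bayesian Network to write each per-edge probability as a sum over state pairs. First I would pass to the negative log-likelihood: since $\log$ is monotone,
\begin{equation*}
\argmax_{(\btensor,\bro)} \LF_{FR} = \argmin_{(\btensor,\bro)} \Bigl[-\sum_{(\idxi,\idxj)\in\edges_\inter} \log \Prob(\pTest_{\idxi\idxj}=\oTest_{\idxi\idxj};\btensor,\bro)\Bigr].
\end{equation*}

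Next, I would compute a single factor $\Prob(\pTest_{\idxi\idxj}=\oTest_{\idxi\idxj};\btensor,\bro)$ by marginalizing over the two parent states $\pState_\idxi,\pState_\idxj$ in the Bayesian network. Because the priors on the states are independent and identically distributed, the marginal factorizes as $\Prob(\pState_\idxi=\vState_\idxl)\Prob(\pState_\idxj=\vState_\idxm) = p_\idxl(\bro)p_\idxm(\bro)$, so that
\begin{equation*}
\Prob(\pTest_{\idxi\idxj}=\oTest_{\idxi\idxj};\btensor,\bro) = \sum_{\idxl,\idxm=1}^{\nState} p_{\idxh_{\idxi\idxj}\,|\,\idxl,\idxm}(\btensor)\,p_\idxl(\bro)\,p_\idxm(\bro),
\end{equation*}
where $\idxh_{\idxi\idxj}$ is the score-value index with $\oTest_{\idxi\idxj}=\vTest_{\idxh_{\idxi\idxj}}$. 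By the definition of $g_{(\idxh)}$, this is exactly $\exp\bigl(-g_{(\idxh_{\idxi\idxj})}(\btensor,\bro)\bigr)$.

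Then I would regroup the sum over edges by score value. Let $N_\idxh := |\{(\idxi,\idxj)\in\edges_\inter : \oTest_{\idxi\idxj}=\vTest_\idxh\}|$; partitioning $\edges_\inter$ according to the observed score gives
\begin{equation*}
-\sum_{(\idxi,\idxj)\in\edges_\inter}\log \Prob(\pTest_{\idxi\idxj}=\oTest_{\idxi\idxj};\btensor,\bro) = \sum_{\idxh=1}^{\nTest} N_\idxh\, g_{(\idxh)}(\btensor,\bro).
\end{equation*}
The identity $N_\idxh = \sum_{\idxi=1}^{\nNodes} \nEdges_\idxi^{(\idxh)}$ follows from the assumption that each edge of the score graph contributes to exactly one in-neighborhood count; hence $N_\idxh/\nEdges = \phi^{(\idxh)}$. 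Dividing the objective by the positive constant $\nEdges$ does not change the argmin, which yields $\bm{\phi}^\top\bm{g}(\btensor,\bro)$ as in \eqref{eq:optimization_problem}.

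There is no real obstacle here; the derivation is essentially algebraic. The only point worth stating carefully is the partition argument used to pass from the edge-indexed sum to the score-indexed sum, and the fact that $\sum_\idxi \nEdges_\idxi^{(\idxh)}$ counts every edge of $\graph_\inter$ with score $\vTest_\idxh$ exactly once, because every directed edge $(\idxj,\idxi)$ is counted only in $\nEdges_\idxi^{(\idxh)}$ (its unique head).
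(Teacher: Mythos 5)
Your proposal is correct and follows essentially the same route as the paper: marginalize each per-edge probability over its two parent states using the Bayesian-network factorization, aggregate edges by observed score value, and use the identity $n^{(\idxh)}=\sum_{\idxi}\nEdges_\idxi^{(\idxh)}$ before normalizing by $\nEdges$. The only cosmetic difference is that the paper first proves a product-form expression for $\LF_{FR}$ (Lemma~\ref{lem:likelihood_explicit}) and then takes logarithms, whereas you take logarithms first and then regroup the resulting sum; the two orderings are equivalent.
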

The proof is given in Appendix \ref{app:loglike_FR}.

In order to solve the optimization problem \eqref{eq:optimization_problem} in a
distributed way, each agent in the network needs to know the vector $\bm{\phi}$.
A naive approach is to first run a consensus algorithm to obtain approximated
local ``copies'' of
$\bm{\phi}$; 
then, \eqref{eq:optimization_problem} can be solved by applying a standard
(centralized) optimization method, e.g. the projected gradient method.
However, in this approach one needs to wait for the consensus algorithm to
converge (up to the required accuracy), then to start another iterative
(local) procedure to finally obtain the solution. We propose here a different
approach, where only a single iterative (distributed) procedure is run. The idea
is to combine one step of consensus with one step of gradient in order to build
a sequence which converges to an optimal solution. 

Let $\nEdges_\idxi$ be the
number of incoming edges of agent $\idxi$ into the score graph $\graph_\inter$,
i.e., $\nEdges_\idxi = |N_\idxi^I|$. For each $\idxt\in\mathbb{Z}_{\geq 0}$,
agent $\idxi$ stores in memory two local states
$\bm{\xi}_\idxi(t) =
(\xi_\idxi^{(1)}(\idxt),\dots,\xi_\idxi^{(\nTest)}(t))$
and $\eta_\idxi(\idxt)$, an estimate $\bm{\phi}_\idxi(\idxt)$ of
$\bm{\phi}$, and an estimate
$(\bm{\hat{\tensor}}_{\idxi}(t),\bm{\hat{\ro}}_{\idxi}(t))$ of
$(\bm{\hat{\tensor}},\bm{\hat{\ro}})$.

By following the push-sum consensus algorithm to compute averages in directed
graphs \cite{benezit2010weighted}, we provide the following distributed algorithm to compute
$\bm{\phi}$. By denoting $d_\idxj(\idxt) = |\nNodes_{\comm,\idxi}^{O}(\idxt)|$ the
out-degree of node $\idxj$ at time $t$ in the communication graph
$\graph_\comm(t)$, node $i$ implements
\begin{align}
\begin{split}
\xi_i^{(\idxh)}(t+1) &= \sum_{j\in N_i^{\comm,I}(t)} \frac{\xi_j^{(\idxh)}(t)}{d_j(t)}\\
  \eta_i(t+1) &= \sum_{j\in N_\idxi^{\comm,I}(t)} \frac{\eta_j(t)}{d_j(t)}\\
  \phi_i^{(\idxh)}(t+1) &= \frac{\xi_i^{(\idxh)}(t+1)}{\eta_i(t+1)}\\
\end{split}
  \label{eq:push_sum}
\end{align}
with $\xi_\idxi^{(\idxh)}(0) = \nEdges_\idxi^{(\idxh)}$ and
$\eta_\idxi(0) = \nEdges_i$.

Then, each node can use its current estimate $\phi_i^{(\idxh)}(t)$ to implement
a gradient step on the estimated cost function
$\bm{\phi}_\idxi(\idxt)^\top\bm{g}$. 
That is, let
$(\bm{\hat{\tensor}}_{\idxi,0}, \bm{\hat{\ro}}_{\idxi,0}) \in
\mathcal{S}_{\btensor} \times \mathcal{S}_{\bro}$
be a starting point for the distributed estimation algorithm, $\alpha > 0$ a
suitable step-size, then
$(\bm{\hat{\tensor}}_\idxi(0), \bm{\hat{\ro}}_\idxi(0)) =
(\bm{\hat{\tensor}}_{\idxi,0}, \bm{\hat{\ro}}_{\idxi,0})$ and
\begin{align}
\begin{split}
(\bm{\hat{\tensor}}_\idxi(\idxt + 1), \bm{\hat{\ro}}_\idxi(\idxt + 1)) &=\\
  \Big[( \bm{\hat{\tensor}}_\idxi(\idxt), \bm{\hat{\ro}}_\idxi(\idxt)&) - \alpha
                                                                       \bm{\phi}_\idxi(\idxt)^\top\nabla\bm{g}(\bm{\hat{\tensor}}_\idxi(t),
                                                                       \bm{\hat{\ro}}_\idxi(\idxt))\Big]^{+} ,
                                                                     \end{split}
                                                                       \label{eq:optim_step}
\end{align}
with $[\cdot]^+$ the (Euclidean) projection operator onto the feasible set
$\mathcal{S}_{\Tensor}\times\mathcal{S}_{\Ro}$. 

The following technical assumption is needed to ensure that this last is well
defined.
\begin{assumption}\label{ass:feasible_set}
The given sets $\Tensor$ and $\Ro$ are both subsets of finite-dimensional real-vector spaces, and the product set $\mathcal{S}_{\Tensor}\times\mathcal{S}_{\Ro}$ is compact and convex.
\end{assumption}

The convergence properties of the distributed algorithm defined by \eqref{eq:push_sum}-\eqref{eq:optim_step} are given in the following theorem.

\begin{theorem}\label{thm:constant_stepsize}
  Let Assumptions \ref{ass:connectivity} and \ref{ass:feasible_set}
  hold. Suppose that $\bm{g}$ is differentiable, and that
  $\nabla\bm{g}$ is bounded and Lipschitz continuous, with constant
  $L>0$, over the feasible set
  $\mathcal{S}_{\Tensor}\times\mathcal{S}_{\Ro}$.  Let
  $0 < \alpha < \frac{2}{L}$.
  Then, any limit point of the sequence $\{(\bm{\hat{\tensor}}_\idxi(\idxt), \bm{\hat{\ro}}_\idxi(\idxt))\}_{\idxt\in\mathbb{N}}$ generated by \eqref{eq:push_sum}-\eqref{eq:optim_step} is a
  stationary point of the objective function
  $\bm{\phi}^\top\bm{g}$ over the feasible set
  $\mathcal{S}_{\Tensor}\times\mathcal{S}_{\Ro}$.
\end{theorem}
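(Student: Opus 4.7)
The plan is to interpret the per-node iteration \eqref{eq:optim_step} as a projected gradient step on the ``true'' objective $F(\btensor,\bro) := \bm{\phi}^\top \bm{g}(\btensor,\bro)$ corrupted by a vanishing additive error coming from the push-sum dynamics \eqref{eq:push_sum}. Letting $z_i(t) := (\bm{\hat{\tensor}}_i(t),\bm{\hat{\ro}}_i(t))$ and $e_i(t) := (\bm{\phi}_i(t) - \bm{\phi})^\top \nabla\bm{g}(z_i(t))$, the update reads
\[
z_i(t+1) = \bigl[z_i(t) - \alpha \nabla F(z_i(t)) - \alpha\, e_i(t)\bigr]^{+},
\]
so the task reduces to (i) showing that $\|e_i(t)\|$ is square-summable and (ii) that such a perturbed projected-gradient recursion admits only stationary limit points.

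For (i), I would invoke classical push-sum results for time-varying directed graphs under Assumption~\ref{ass:connectivity}: joint strong connectivity over windows of length $Q$ guarantees that $\phi_i^{(h)}(t)$ converges to the common ratio $\sum_i n_i^{(h)}/n = \phi^{(h)}$ at a geometric rate (see~\cite{benezit2010weighted}). Since $\nabla \bm{g}$ is bounded on the compact set $\mathcal{S}_{\Tensor}\times\mathcal{S}_{\Ro}$, we obtain $\|e_i(t)\| \le M\|\bm{\phi}_i(t)-\bm{\phi}\|$ for some $M>0$, hence $\sum_t \|e_i(t)\|^2 < \infty$ for every $i$.

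For (ii), I would derive a perturbed descent inequality. The variational characterization of the Euclidean projection onto the convex feasible set yields
\[
\langle \nabla F(z_i(t)) + e_i(t),\, z_i(t+1)-z_i(t)\rangle \le -\tfrac{1}{2\alpha}\|z_i(t+1)-z_i(t)\|^2,
\]
and combining it with the descent lemma for the $L$-smooth function $F$ (note that $\bm{\phi}$ is a probability vector with $\|\bm{\phi}\|_2\le 1$, so $\nabla F$ inherits the Lipschitz constant $L$ of $\nabla\bm{g}$), followed by a Young-type absorption of the cross term $\langle e_i(t), z_i(t+1)-z_i(t)\rangle$, gives
\[
F(z_i(t+1)) \le F(z_i(t)) - c\,\|z_i(t+1)-z_i(t)\|^2 + C\,\|e_i(t)\|^2
\]
for constants $c,C>0$; the assumption $\alpha < 2/L$ is precisely what is needed to keep $c$ positive after the Young absorption. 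Telescoping, using that $F$ is bounded below on the compact feasible set and that $\sum_t \|e_i(t)\|^2 < \infty$, yields $\sum_t \|z_i(t+1)-z_i(t)\|^2 < \infty$, and in particular $z_i(t+1)-z_i(t)\to 0$.

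Finally, let $z_i^\star$ be a limit point along a subsequence $z_i(t_k)\to z_i^\star$, which exists by compactness (Assumption~\ref{ass:feasible_set}). Since $z_i(t_k+1)-z_i(t_k)\to 0$, also $z_i(t_k+1)\to z_i^\star$; passing to the limit in the recursion, using continuity of the projection onto the compact convex set $\mathcal{S}_{\Tensor}\times\mathcal{S}_{\Ro}$, continuity of $\nabla F$, and $e_i(t_k)\to 0$, I obtain $z_i^\star = [z_i^\star - \alpha\nabla F(z_i^\star)]^{+}$, which is the standard fixed-point characterization of a stationary point of $\bm{\phi}^\top \bm{g}$ on $\mathcal{S}_{\Tensor}\times\mathcal{S}_{\Ro}$. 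The main technical obstacle is extracting a sufficiently strong negative $\|z_i(t+1)-z_i(t)\|^2$ term to dominate the summable perturbation: a naive descent-lemma bookkeeping at the nominal point $z_i(t)-\alpha\nabla F(z_i(t))$ loses control of the cross term, and it is the projected-gradient variational inequality combined with $\alpha<2/L$ that provides the slack needed to close the argument.
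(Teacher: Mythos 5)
Your proposal follows essentially the same route as the paper's proof: both view \eqref{eq:optim_step} as an inexact projected-gradient step on $\bm{\phi}^\top\bm{g}$, combine the descent lemma with the projection variational inequality, use the geometric convergence of push-sum under Assumption~\ref{ass:connectivity} to make the perturbation summable, show that the successive increments vanish, and pass to the limit via the fixed-point characterization $\hat{z} = [\hat{z}-\alpha\nabla(\bm{\phi}^\top\bm{g})(\hat{z})]^{+}$. The only differences are bookkeeping (you absorb the cross term with Young's inequality and telescope, whereas the paper bounds it by $C d^{\idxt}$ using compactness of the feasible set and then invokes the Bertsekas--Tsitsiklis convergence lemma), plus one constant to fix: the projection variational inequality yields $-\tfrac{1}{\alpha}\|z_i(t+1)-z_i(t)\|^2$, not $-\tfrac{1}{2\alpha}$, and it is the sharper constant that makes the stated threshold $\alpha<2/L$ (rather than $\alpha<1/L$) suffice.
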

The proof is given in Appendix~\ref{app:constant_stepsize}.

\section{Application of the framework}\label{sec:simulations}

In this section we provide numerical results for two meaningful case
studies, using the anomaly-detection and the social-ranking models described in
Section \ref{subsec:example_scenarios}. Beforehand, we analyze a special binary/binary
case for which the \JPHFR/ estimator can be computed in closed
form.
\subsection{Distributed learning for anomaly detection: binary scores and binary
states}

For the anomaly-detection model described in
Section~\ref{subsec:example_scenarios}, $\nState=\nTest=2$ is a very special
case where a closed form solution for the \JPHFR/ optimization problem can be
found. Using Lemma \ref{lem:likelihood_explicit} we compute the fully relaxed
likelihood as
\begin{equation*}
\LF_{FR}(\ro) = \Big[\frac{1}{2}\ro + \ro(1 - \ro)\Big]^{\nEdges^{(2)}}
\Big[\frac{1}{2}\ro + (1 - \ro)^2\Big]^{\nEdges^{(1)}}
\end{equation*}
whose derivative is
\begin{equation*}
\LF_{FR}'(\ro) = \nEdges\Big[\frac{1}{2}\ro + \ro(1 - \ro)\Big]^{\nEdges^{(2)} - 1} \Big[\frac{1}{2}\ro + (1 - \ro)^2\Big]^{\nEdges^{(1)} - 1} h_0(\ro)
\end{equation*}
with
\begin{multline*}
h_0(\ro) = \phi^{(2)} \Big(\frac{3}{2} - 2\ro\Big) \Big(\frac{1}{2}\ro + (1 - \ro)^2\Big)\\
 - \phi^{(1)} \Big(\frac{3}{2} - 2\ro\Big) \Big(\frac{1}{2}\ro + \ro(1 - \ro)\Big).
\end{multline*}
The first two factors have roots that give
a zero cost function, thus optimizers are obtained by studying the sign of  $h_0(\ro)$.
Its roots  are $\ro_0 = \frac{3}{4}$ and
$\ro_{1,2} = \frac{1}{4}(3 \pm \sqrt{9-16\phi^{(2)}})$, and straightforward
computations show that for $\phi^{(2)} \leq \frac{9}{16}$ both $\ro_{1}$ and
$\ro_{2}$ are (real) local maximizers. However,
$\ro_{2} = \frac{1}{4}(3 + \sqrt{9-16\phi^{(2)}}) \geq 1$. Thus, we conclude that
\[
\hat{\ro}_{\text{\tiny FR}} =
\begin{cases}
\frac{3}{4} & \text{if} \; \phi^{(2)} \geq\frac{9}{16} \\[1.2ex]
\frac{1}{4}(3 - \sqrt{9-16\phi^{(2)}}) & \text{otherwise}.
\end{cases}
\]
With the expression of $\hat{\ro}_{\text{\tiny FR}}$ in hand, we are able to
compute the soft and the MAP classifiers according to
Theorem~\ref{thm:MAP_characterization}, hence only a consensus on $\bm{\phi}$ is needed. 

Besides this special (binary-binary) case, in general it is not possible to find
a closed form, hence the proposed distributed estimators are needed in
practice. Indeed, in the next example we show that, as soon as we remove the
hypothesis that scores are binary, the cost function becomes analytically
intractable, so that numerical optimization algorithms, as the ones we are
proposing in this paper, are unavoidable.

\subsection{Distributed learning for anomaly detection: $R$-ary scores and
  binary states} 
We consider an extension of the previous scenario by allowing scores to assume
multiple values. We basically relax the fact that a normally working node gives
the exact state of the tested node in a deterministic way. We assume that the
$R$ possible scores are given according to some probability, depending e.g. on the
reliability of the test or expressing the level of trust about the
tested node. For the sake of clarity we just consider a linear trend.
Formally, we let
\begin{align*}
\nTest &\ge 2,\quad\,\,\,\,\, \vTest_\idxh = \idxh - 1,\quad \idxh = 1,\dots,\nTest,\\
\nState &= 2,\quad\,\,\,\,\, \vState_1 = 0,\quad\,\, \vState_2 = 1,
\end{align*}
and consider the following probabilistic model:
\begin{align*}
p_{\idxh|\idxl,\idxm} &= \frac{2}{R}(1 - \vState_\idxl)\Big[(1 -
                        \vState_\idxm)\Big(1 -
                        \frac{\vTest_\idxh}{\vTest_\nTest}\Big) +
                        \vState_\idxm\frac{\vTest_\idxh}{\vTest_\nTest}\Big]
                        + \frac{\vState_\idxl}{R},\\
p_\idxl(\ro) &= \ro^{\vState_\idxl}(1 - \ro)^{1 - \vState_\idxl},\qquad \ro\in[0,1].
\end{align*}
Notice that this model, hereafter referred to as the \emph{reliability model}, boils down to the Preparata model \cite{preparata1967connection} for the case of binary
score ($R=2$, see Sec. \ref{subsub:preparata}). Clearly, Lemma
\ref{lem:likelihood_explicit} with the expression above for $p_{\idxh|\idxl,\idxm}$ does not have a
closed form solution, hence we show the solution by numerical analysis.

We have performed Monte Carlo simulations, with $1000$ trials for each point, for a score graph with
$\nNodes = 300$ agents according to the probabilistic model above, for
$\nTest = 5$ and $\ro = \frac{3}{10}$. As for the score graph, we considered
a sequence of scenarios by starting from a directed cyclic configuration and
then, progressively, adding edges up to the complete graph.

We considered the reliability model to compute both the \JPHNR/ and the \JPHFR/
estimator of the hyperparameter, which in this case is $\ro = \frac{3}{10}$;
then we have used these estimators to perform the classification.  Results of
these simulations are shown in Fig.~\ref{fig:diagnosis-100-gamma}, where the
Root Mean Square Error (RMSE) of the estimates of $\ro$ is reported for the two
proposed estimators. 
\begin{figure}[!htpb]
	\centering
	\includegraphics[scale=0.8]{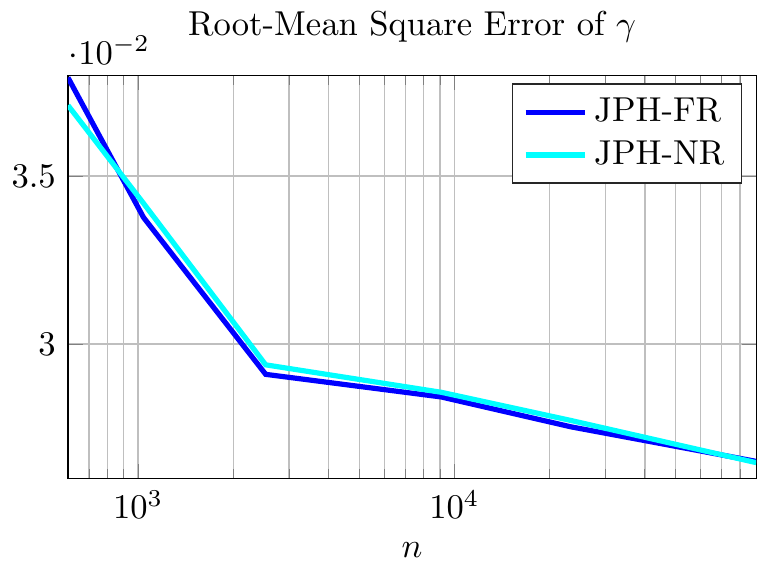}
	\caption{RMSE of the estimates of $\ro$ as a function of the number of edges $\nEdges$.}
		\label{fig:diagnosis-100-gamma}
\end{figure}
Remarkably, both estimators are able to provide a very good
estimate, which  improves as $\nEdges$ increases (as one might expect); moreover,
the \JPHFR/ is very close to the \JPHNR/.  The impact of such estimates of the
hyperparameter onto the misclassification rate is reported in
Fig.~\ref{fig:diagnosis-100}. 
\begin{figure}[!htpb]
	\centering
	\includegraphics[scale=0.8]{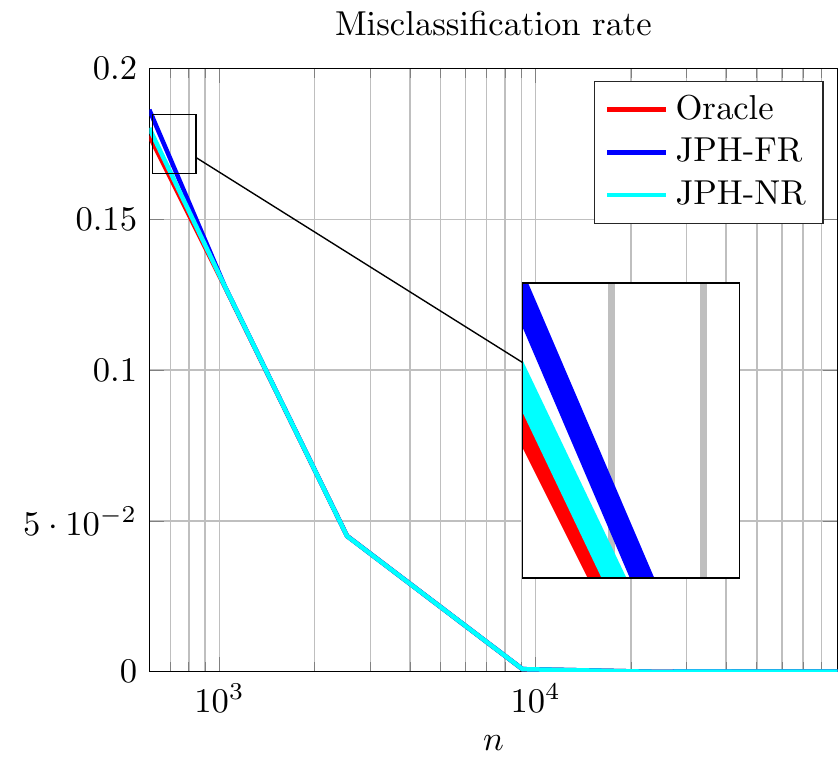}
	\caption{Misclassification rate as function of the number of edges $\nEdges$
    increasing from $\nNodes$ (cyclic graph) to $\nNodes^2-\nNodes$ (complete
    graph), with $\nNodes = 300$, $\ro = \frac{3}{10}$ and $\nTest = 5$.}
		\label{fig:diagnosis-100}
\end{figure}
As a benchmark, the curve corresponding also to
the ``oracle'' classifier that uses the true value of $\gamma$ is
reported. Results for this case are rather impressive, since both the proposed
estimator are practically attaining the performance of the ``oracle''
classifier. This indicates that the proposed relaxations are able to retain the salient aspects of the statistical relationship among the variables,  yielding results  as good as  the true graphical model.

\subsection{Distributed learning for social ranking}\label{sub:community_discovery}

In this Section we report results for the social-ranking model described in
Section~\ref{subsec:example_scenarios} with $\nState=3$, $\nTest=3$ and $\nNodes = 300$. 
We use as semi-distance in \eqref{eq:mallow}
$ d(\vState_\idxl,\vState_\idxm) = |\vState_\idxl - \vState_\idxm| = |\idxl -
\idxm|$, and as true parameter-hyperparameter $\tensor = \frac{1}{2}$ and $\ro = \frac{3}{10}$.

Monte Carlo simulations have been run to test both  \JPHNR/ and  \JPHFR/, with $1000$ trials for each point. 
Compared to the previous case, both parameter and hyperparameter are (jointly)
estimated.  Figs.~\ref{fig:ranking-100-theta}-\ref{fig:ranking-100-gamma} report
the RMSE for $\tensor$ and $\ro$, respectively, as a function of the number of
edges. It is worth noting that both the estimation errors decrease as the number
of edges increases (more data are available) and the ``less relaxed'' \JPHNR/
estimator slightly outperforms the \JPHFR/. 
\begin{figure}[!htpb]
	\centering
	\includegraphics[scale=0.8]{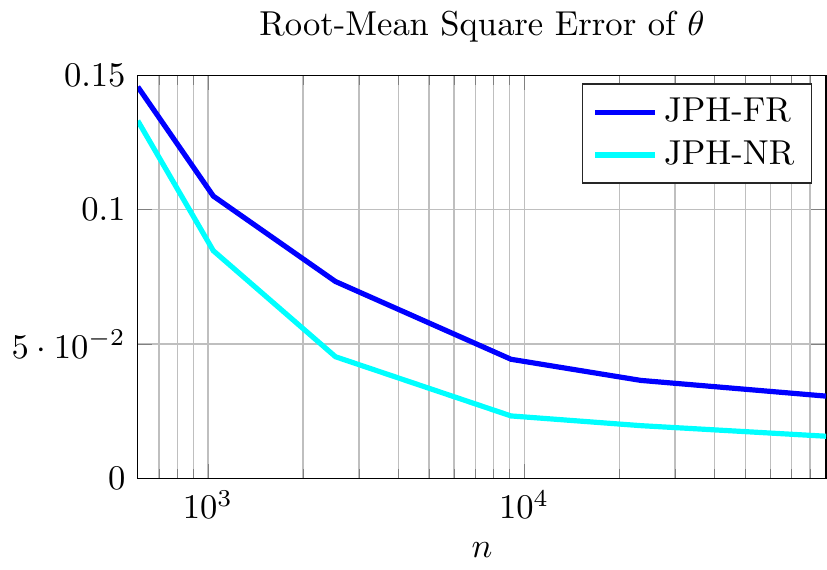}
	\caption{RMSE of the estimates of $\tensor$ as a function of the number of edges.}
	\label{fig:ranking-100-theta}
\end{figure}
\begin{figure}[!htpb]
	\centering
	\includegraphics[scale=0.8]{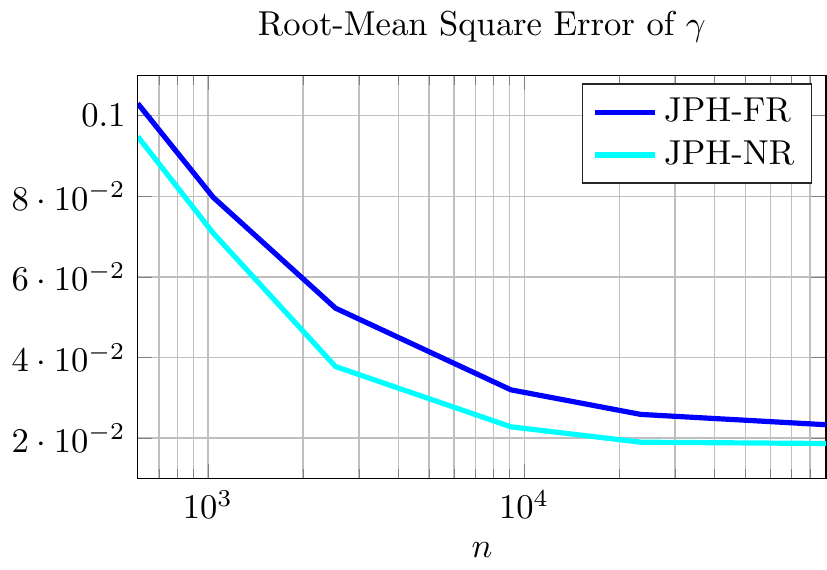}
	\caption{RMSE of the estimates of $\ro$ as a function of the number of edges.}
	\label{fig:ranking-100-gamma}
\end{figure}

The impact of estimation errors on
the learning performance is shown in Fig.~\ref{fig:ranking-100}: the curves
clearly show that, compared to the reliability model, the inferential
relationship between scores and states is ``weaker'' hence more data are needed
for a good learning. Nonetheless, the proposed estimators are still very close
to the performance of the benchmark (``oracle'').
\begin{figure}[!htpb]
	\centering
	\includegraphics[scale=0.8]{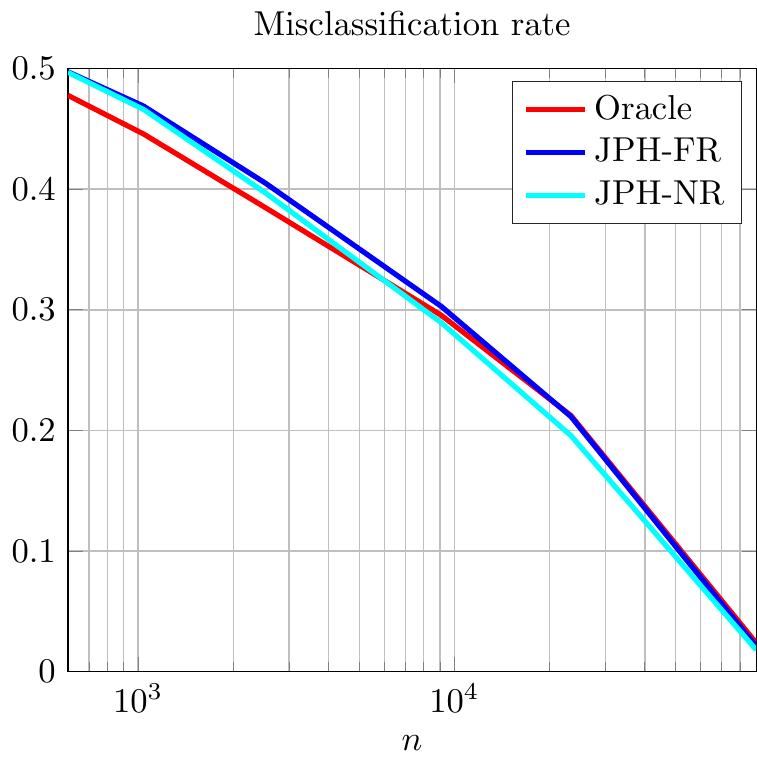}
	\caption{Misclassification rate as function of the number of edges $\nEdges$
    increasing from $\nNodes$ (cyclic graph) to $\nNodes^2-\nNodes$ (complete
    graph), with $\nNodes = 300$, $\ro = \frac{3}{10}$, $\tensor = \frac{1}{2}$, $C = 3$ and $R = 3$.}
	\label{fig:ranking-100}
\end{figure}

Finally, we report an additional case to highlight the usefulness of the soft
classifier. We considered a network of $\nNodes = 10$ agents, whose score graph
$\graph_\inter$ is shown in Fig.  \ref{fig:ranking-soft}. We drew the states and
scores in the given score graph according to the previous distributions, and
then used the social-ranking model to solve the learning problem as before, by
means of the \JPHFR/ estimator.

\begin{figure}[!htpb]
\centering
	\includegraphics[scale=0.8]{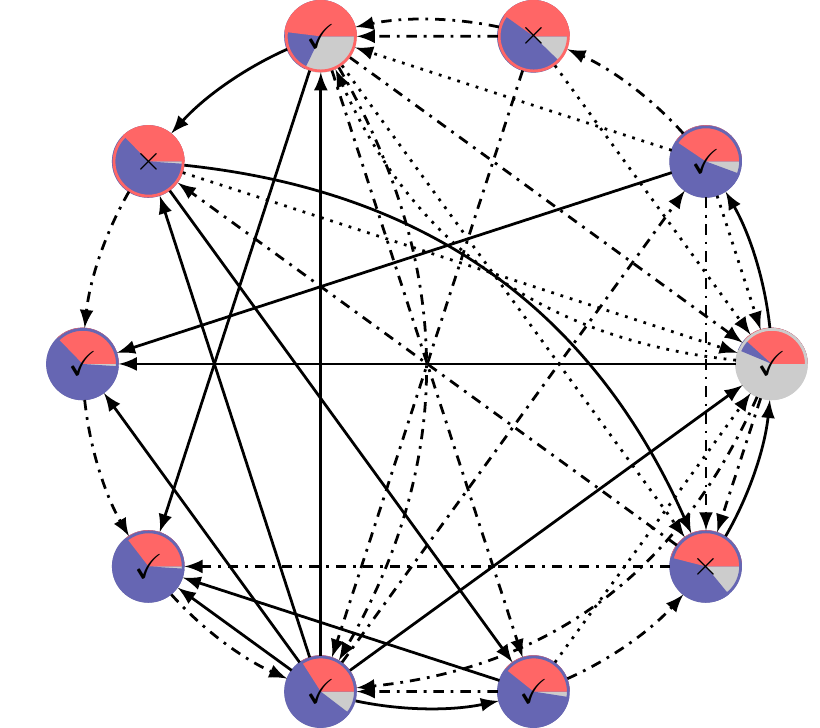}
\caption{Soft classifier representation of a particular score graph.}
\label{fig:ranking-soft}
\end{figure}

The contour of a node has a color which indicates the true state of the
node. Inside the node we have represented the outcome of the soft
classification, i.e., the output of the local self-classifier, as a
pie-chart. The colors used are: red for state $1$, blue for state $2$, gray for
state $3$. Moreover, each edge is depicted by a different pattern based on its
evaluation result $r_h$: solid lines are related to scores equal to $3$, dash
dot lines are related to scores equal to $2$, while dotted lines are related to
scores equal to $1$. We assigned to each node a symbol $\checkmark$ or $\times$
indicating if the MAP classifier correctly decided for the true state or not. We
show a realization with three misclassification errors; remarkably, all of them
correspond to a lower confidence level given by the soft classifier, which is an
important indicator of the lack of enough information to reasonably trust the
decision. It can be observed that the edge patterns concur to determine the
decision. Indeed, the only gray-state node is correctly classified thanks to the
predominant number of dotted edges insisting on it, and similarly for the
blue-state nodes which mostly have solid incoming edges. When a mix of scores
are available, clearly there is more uncertainty and the learning may fail, as
for two of the red-state nodes.

As a final remark, we point out that in some scenarios symmetries may arise in
the model, thus creating ambiguities in the labeling of the
communities. Specifically, for this scenario, it can be proven that the relaxed
likelihoods $\LF_{NR}$ and $\LF_{FR}$ take on the same value when $\gamma$ is
replaced by $1-\gamma$, e.g., $\LF_{NR}({\bm{\oTest}\!}_{\edges_\inter};\btensor, \bro) =
\LF_{NR}({\bm{\oTest}\!}_{\edges_\inter};\btensor, 1 - \bro)$. Thus the
hyperparameter estimate is not unique in this case. However, this has just the
effect of swapping the label of communities $\vState_1$ and  $\vState_3$. Notice
that, in the node-based relaxed case, agents reach directly a consensus on
the same value, thus circumventing possible inconsistencies in the labeling. For
the fully relaxed case agents can easily agree on the same value in a number of
steps equal to the diameter.

\section{Conclusion}\label{sec:conclusions}
In this paper we have proposed a novel probabilistic framework for distributed
learning, which is particularly relevant to emerging contexts such as
cyber-physical systems and social networks. In the proposed set-up, nodes of a
network want to learn their (unknown) state, as typical in classification
problems; however, differently from a classical set-up, the information does not
come from (noisy) measurements of the state but rather from observations
produced by the interaction with other nodes. For this problem we have proposed
a hierarchical (Bayesian) framework in which the parameters of the interaction
model as well as hyperparameters of the prior distributions may be unknown. Node
classification is performed by means of a local Bayesian classifier that uses
parameter-hyperparameter estimates, obtained by combining the plain ML with the
Empirical Bayes estimation approaches in a joint scheme. The resulting estimator
is very general but, unfortunately, not amenable to distributed
computation. Therefore, by relying on the conceptual tool of graphical models,
we have proposed two approximated ML estimators that exploit proper relaxations
of the conditional dependences among the involved random variables. Remarkably,
the two approximated likelihood functions do lead to distributed estimation
algorithms: specifically, for the node-based relaxed estimator, available
distributed optimization algorithms can be used, while for the fully relaxed a
faster scheme can be implemented that combines a local descent step with a
diffusion (consensus-based) step. To demonstrate the application of the proposed
schemes, we have addressed two example scenarios from anomaly detection in
cyber-physical networks and user profiling in social networks, for which Monte
Carlo simulations are reported. Results show that the proposed distributed
learning schemes, although based on relaxations of the exact likelihood
function, exhibit performance very close to the ideal classifier that has
perfect knowledge of all parameters. 

\appendix

\subsection{Preliminaries on Bayesian Networks}

Before proving Theorem~\ref{thm:MAP_characterization}, we need to recall some
useful definitions and results from graphical model theory. We want to
understand when two random variables $Z_{\text{in}}$ and $Z_{\text{fin}}$ in a
distribution associated with a Bayesian Network structure $\mathcal{K}$ are
conditionally independent given another variable $Z_g$. We first introduce a
shorthand notation. We write $Z_i\rightleftharpoons Z_{i+1}$ meaning that either
$Z_i\rightarrow Z_{i+1}$, or $Z_i\leftarrow Z_{i+1}$, or both hold.

\begin{definition}
  Given a graphical model $\mathcal{K}$, we say that $Z_0,\dots,Z_r$ form a
  \emph{trail} in $\mathcal{K}$ if, for every $i=0,\dots,r-1,$ we have
  $Z_i\rightleftharpoons Z_{i+1}$. If, for every $i=0,\dots,r-1,$ we have that
  $Z_i\rightarrow Z_{i+1}$, then the trail is called a \emph{directed path}.
\end{definition}

\begin{definition}
  We say that $Z_d$ is a \emph{descendant} of $Z$ in the graph $\mathcal{K}$ if
  there exists a directed path from $Z$ to $Z_d$.
\end{definition}

When influence can flow from $Z_{\text{in}}$ to $Z_{\text{fin}}$ via $Z_g$, we say that the
two-arrow trail $Z_{\text{in}}\rightleftharpoons Z_g\rightleftharpoons Z_{\text{fin}}$ is
\emph{active}. For each of the four possible two-arrow trails, we detail the
condition under which it is active:
\begin{itemize}
\item \emph{Causal trail} ($Z_{\text{in}}\rightarrow Z_g\rightarrow Z_{\text{fin}}$): active if and only if $Z_g$ is not observed.
\item \emph{Evidential trail} ($Z_{\text{in}}\leftarrow Z_g\leftarrow Z_{\text{fin}}$): active if and only if $Z_g$ is not observed.
\item \emph{Common cause} ($Z_{\text{in}}\leftarrow Z_g\rightarrow Z_{\text{fin}}$): active if and only if $Z_g$ is not observed.
\item \emph{Common effect} ($Z_{\text{in}}\rightarrow Z_g\leftarrow Z_{\text{fin}}$): active if and only if either $Z_g$ or one of its descendants is observed.
\end{itemize}
Now, consider the case of a longer trail
$Z_{\text{in}}=Z_0\rightleftharpoons\cdots\rightleftharpoons Z_r=Z_{\text{fin}}$. Intuitively, for influence
to \emph{flow} from $Z_{\text{in}}$ to $Z_{\text{fin}}$, it needs to flow through every single node
on the trail. In other words, $Z_{0}$ can influence $Z_r$ if for every
$i=1,\ldots, r-1$,
then $Z_{i-1}\rightleftharpoons Z_i\rightleftharpoons Z_{i+1}$ is active.

Obviously, it can happen that there is more than one trail between two nodes; in
these cases one node can
influence another if and only if there exists a trail along which influence can
flow. 
If there is no active trail between two random variables $Z_{\text{in}}$ and $Z_{\text{fin}}$, given
random variable $Z_g$, they are said to be \emph{d-separated}.

\subsection{Proof of Theorem~\ref{thm:MAP_characterization}}\label{app:MAP_characterization}
We are now ready to give the following lemma needed for the proof of Theorem~\ref{thm:MAP_characterization}.
\begin{lemma}\label{lem:independency}
Let $\idxi,\idxj,k\in\{1,\ldots,\nNodes\}$ with $\idxj\neq k$. The following statements hold:
\begin{enumerate}
\item if $(\idxi,\idxj),(k,\idxi) \in\edges_\inter$
  then $\pTest_{\idxi\idxj}$ and $\pTest_{k\idxi}$ are conditionally independent
  given $\pState_\idxi$;
\item if $(\idxi,\idxj),(\idxi,k) \in\edges_\inter$ then $\pTest_{\idxi\idxj}$
  and $ \pTest_{\idxi k}$ are conditionally independent given $\pState_\idxi$;
\item if $(\idxj,\idxi),(k,\idxi) \in\edges_\inter$ then $\pTest_{\idxj\idxi}$
  and $\pTest_{k\idxi}$ are conditionally independent given $\pState_\idxi$;
\end{enumerate}
\begin{proof} We prove only the first statement, the other two can be proven in
  the same way. Consider a trail
  $\pTest_{\idxi\idxj} = Z_0\rightleftharpoons\cdots\rightleftharpoons
  Z_r=\pTest_{k\idxi}$. Denoting by $s$ the number of state random
  variables traversed along the trail, then the length of the trail (number of
  arrows) is $r = 2s$. This property can be easily visualized in
  Figure~\ref{fig:graphical-model-example}. For each $u=0,\dots,s-1$ it
  results:
  \begin{align*}
    Z_{2u}\leftarrow Z_{2u+1}\rightarrow Z_{2u+2}
  \end{align*}
  with
  \[
  Z_{2u} = \pTest_{\idxi_u\idxj_u},\quad Z_{2u+1} = \pState_{v_u},\quad Z_{2u+2} = \pTest_{\idxi_{u+1}\idxj_{u+1}},
  \]
where we have that
$v_u\in\{\idxi_u,\idxj_u\}\cap\{\idxi_{u+1},\idxj_{u+1}\}$, while
$(\idxi_u,\idxj_u),(\idxi_{u+1},\idxj_{u+1})\in\edges_\inter$. 

Our objective is to prove that the previous trail is blocked (i.e., not active); this will imply
that $\pTest_{\idxi\idxj}, \pTest_{k\idxi}$ are $d$-separated given
$\pState_\idxi$, so that the proof follows, \cite{pearl1996identifying}.
We observe that if $\pState_{v_0} = \pState_\idxi$, then we have,
inside the trail, the common cause $Z_0\leftarrow \pState_\idxi\rightarrow Z_2$
in which $\pState_\idxi$ is observed; thus the previous common cause is blocked,
implying that also the trail is blocked.
Next, we prove that block occurs at most for $u=1$. Consider $\pState_{v_0} \neq \pState_\idxi$. In this case, from the assumption $\idxj\neq k$, by contradiction, we find
that $s > 1$. By truncating the trail at the fourth element, we have:
\begin{align*}
Z_0\leftarrow X_{v_0}\rightarrow Z_2\leftarrow \pState_{v_1}.
\end{align*}
In the common effect
$\pState_{v_0}\rightarrow Z_2\leftarrow
\pState_{v_1}$, $Z_2$ is not observed, and $Z_2 = \pTest_{\idxi_1\idxj_1}$ has no descendant in
the graphical model; thus the previous common effect is blocked, implying that
also the trail is blocked.
\end{proof}
\end{lemma}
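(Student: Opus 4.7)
The plan is to prove all three statements by a single d-separation argument in the Score Bayesian Network, relying on the criterion recalled in the appendix: two sets of variables are conditionally independent given a third, whenever the latter d-separates them in the underlying DAG. Thus for each case it suffices to show that every trail between the two score random variables is blocked once $\pState_\idxi$ is observed.

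First I would isolate the structural feature of the Score Bayesian Network that makes the argument uniform across the three cases: edges are of the form $\pState_\idxa\to\pTest_{\idxa\idxb}\leftarrow\pState_\idxb$ and nothing else, so score variables are sinks (no outgoing arrows, and in particular no descendants) and state variables are sources (no incoming arrows). Hence any trail between two scores must strictly alternate score--state--score--\dots--state--score, yielding at every internal state a common-cause pattern $\pTest\leftarrow\pState\to\pTest$ and at every internal score a common-effect pattern $\pTest\to\pState\leftarrow\pTest$; note that by relabeling the three cases correspond to the same alternating structure with only the positions of $\idxi,\idxj,k$ changed on the endpoint scores.

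Next I would exploit the sink property to dispatch every trail of length at least four (arrows). Indeed, a common-effect section is active only if the middle node or one of its descendants is observed; but the internal score nodes on such a trail have no descendants at all in the Score Bayesian Network and are themselves unobserved (only $\pState_\idxi$ is part of the evidence), so the first common-effect encountered already blocks the trail. It remains only to handle length-two trails $\pTest_1\leftarrow\pState_v\to\pTest_2$. For such a trail to exist, $\pState_v$ must be a parent of both endpoints, i.e., $v$ must lie in the intersection of the parent sets. A direct case check shows that this intersection is $\{\pState_\idxi\}$ in all three situations: in case~1 it is $\{\pState_\idxi,\pState_\idxj\}\cap\{\pState_k,\pState_\idxi\}$, in case~2 it is $\{\pState_\idxi,\pState_\idxj\}\cap\{\pState_\idxi,\pState_k\}$, and in case~3 it is $\{\pState_\idxj,\pState_\idxi\}\cap\{\pState_k,\pState_\idxi\}$, each of which reduces to $\{\pState_\idxi\}$ exactly because of the hypothesis $\idxj\neq k$. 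Since $\pState_\idxi$ is observed, the corresponding common-cause section is blocked, finishing the proof.

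The main obstacle is conceptual rather than computational: one must carefully verify the ``alternation'' claim and the fact that score nodes are leaves in the DAG, because this is what collapses all long trails to immediately blocked ones and leaves only the three small case checks. Once these structural facts are in place the three statements follow in a unified way, and the only place where the assumption $\idxj\neq k$ is used is to rule out $\pState_\idxj$ or $\pState_k$ from the shared-parent intersection in the length-two trail analysis.
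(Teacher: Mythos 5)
Your proposal is correct and follows essentially the same route as the paper's proof: both argue by d-separation using the criteria recalled in the appendix, exploit the alternating state/score structure of any trail between two score variables, block every trail containing an internal score node via the common-effect criterion (score nodes are unobserved sinks with no descendants), and block the remaining length-two trails at the observed common cause $\pState_\idxi$, with the hypothesis $\idxj\neq k$ used in exactly the same place to force the shared parent to be $\pState_\idxi$. The only differences are organizational --- you split cases on trail length whereas the paper splits on whether the first traversed state variable equals $\pState_\idxi$, and you treat the three statements uniformly where the paper proves only the first and declares the others analogous --- neither of which changes the substance of the argument.
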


After these preliminaries, we can proceed with the proof of Theorem~\ref{thm:MAP_characterization}. For the sake
of clarity of notation, we will omit the dependency on $\btensor$ and $\bro$.

From the Bayes theorem, we know that:
\[u_{\idxi}(\vState_\idxl) = \frac{v_{\idxi}(\vState_\idxl)}{\Prob(\bm{\pTest}_{N_\idxi} = \bm{\oTest}_{N_\idxi})},
\]
where
$v_{\idxi}(\vState_\idxl) := \Prob(\bm{\pTest}_{N_\idxi} =
\bm{\oTest}_{N_\idxi},\pState_\idxi = \vState_\idxl)$.
Our goal is to prove that
$v_{\idxi}(\vState_\idxl) =
p_\idxl(\bm{\hat{\ro}})\pi_\idxi^{\IO}(c_\idxl)\pi_\idxi^\OtoI(c_\idxl)\pi_\idxi^\ItoO(c_\idxl)$.
First of all, from the chain rule we have:
\begin{align}\label{eq:originalMAP}
v_{\idxi}(\vState_\idxl) &= \Prob(\pState_\idxi = c_\idxl) \Prob(\bm{\pTest}_{N_\idxi} = \bm{\oTest}_{N_\idxi}|\pState_\idxi = \vState_\idxl),
\end{align}
and from Lemma \ref{lem:independency}, we know that
\begin{align}\label{eq:MAPfactorization}
\Prob(\bm{\pTest}_{N_\idxi} \!=\!
\bm{\oTest}_{N_\idxi}|\pState_\idxi \!=\! \vState_\idxl) &=
\Prob(\bm{\pTest}_{N_\idxi^{\IO}} \!=\! \bm{\oTest}_{N_\idxi^{\IO}}|\pState_\idxi \!=\! c_\idxl)\nonumber\\
&\quad\times\Prob(\bm{\pTest}_{N_\idxi^{\OtoI}} \!=\!
  \bm{\oTest}_{N_\idxi^{\OtoI}}|\pState_\idxi \!=\! c_\idxl)\\
&\quad\times\Prob(\bm{\pTest}_{N_\idxi^{\ItoO}}
                                                \!=\!
                                                \bm{\oTest}_{N_\idxi^{\ItoO}}|\pState_\idxi
                                                \!=\! c_\idxl).\nonumber 
\end{align}
The next step is to study each one of the three factors. Starting from
$\Prob(\bm{\pTest}_{N_\idxi^{\IO}} = \bm{\oTest}_{N_\idxi^{\IO}}|\pState_\idxi =
c_\idxl)$, we obtain:
\begin{align}\label{eq:MAPfirstFactor}
& \!\!\!\!\!\!\!\Prob(\bm{\pTest}_{N_\idxi^{\IO}} \!\!=\!\!
  \bm{\oTest}_{N_\idxi^{\IO}}|\pState_\idxi \!\!=\!\! c_\idxl)\nonumber\\ 
= &\prod_{j\in N_i^{\IO}}\!\!\Prob(\pTest_{\idxi\idxj} \!\!=\!\! y_{ij},\pTest_{\idxj\idxi} \!\!=\!\! y_{ji}|\pState_\idxi \!\!=\!\! c_\idxl)\nonumber\\
= &\prod_{h,k=1}^R\!\! \Prob(Y_{ij_0} = r_h, Y_{j_0i} = r_k|\pState_\idxi = c_\idxl)^{n^\IO_i(h,k)}\nonumber\\
= &\prod_{h,k=1}^R\!\!\! \Big(\sum_{m = 1}^C \Prob(Y_{ij_0} \!\!=\!\! r_h, Y_{j_0i} \!\!=\!\! r_k, X_{j_0} \!\!=\!\! c_m|\pState_\idxi \!\!=\!\! c_\idxl)\Big)^{n^\IO_i(h,k)}\nonumber\\
= &\prod_{h,k=1}^R\!\! \Big(\sum_{m=1}^C \Prob(X_{j_0} \!\!=\!\! c_m)\Prob(\pTest_{\idxi\idxj_0} \!\!=\!\! r_h|\pState_{\idxi} \!\!=\!\! c_\idxl,\pState_{\idxj_0} \!\!=\!\! c_m)\nonumber\\
&\hspace{45pt}\times\Prob(\pTest_{\idxj_0\idxi} \!\!=\!\! r_k|\pState_{\idxj_0} \!\!=\!\! c_m,\pState_{\idxi} \!\!=\!\! c_\idxl)\Big)^{n^\IO_i(h,k)}.
\end{align}
In the first equation we have used again Lemma \ref{lem:independency}; in the
second equation we have used the fact that
$\pTest_{\idxi\idxj},(\idxi,\idxj)\in\edges_\inter$ are identically distributed,
and we have also aggregated the agents in $N_\idxi^{\IO}$ that receive/give
score $r_h/r_k$ from/to agent $\idxi$; in the third equation we have
marginalized with respect to the random variable $\pState_{\idxj_0}$; in the
fourth equation we have factorized according to the score Bayesian network.

For the second factor $\Prob(\bm{\pTest}_{N_\idxi^{\OtoI}} = \bm{\oTest}_{N_\idxi^{\OtoI}}|\pState_\idxi = c_\idxl)$, we obtain:
\begin{align}\label{eq:MAPsecondFactor}
~\Prob(\bm{\pTest}&_{N_\idxi^{\OtoI}} =
  \bm{\oTest}_{N_\idxi^{\OtoI}}|\pState_\idxi = c_\idxl) \nonumber\\
= &\prod_{j\in N_\idxi^{\OtoI}} \Prob(\pTest_{\idxj\idxi} = y_{ji}|\pState_\idxi = c_\idxl)\nonumber\\
= &\prod_{h=1}^R \Prob(Y_{j_0i} = r_h|\pState_\idxi = c_\idxl)^{n_i^{\OtoI}(h)}\nonumber\\
= &\prod_{h=1}^R \Big(\sum_{m=1}^C \Prob(Y_{j_0i} = r_h, X_{j_0} = c_m|\pState_\idxi = c_\idxl)\Big)^{n_i^{\OtoI}(h)}\nonumber\\
= &\prod_{h=1}^R \Big(\sum_{m=1}^C \Prob(\pState_\idxi = c_m)\nonumber\\
&\hspace{25pt}\times\Prob(\pTest_{\idxj_0\idxi} = r_h|\pState_{\idxj_0} = c_m,\pState_\idxi = c_\idxl)\Big)^{n_i^{\OtoI}(h)}.
\end{align}
Here, we point out that, when using Lemma \ref{lem:independency} in the first
equation, all the random variables $\bm{\pTest}_{N_\idxi^{\OtoI}}$ are
conditionally independent given $\pState_\idxi$. Also, factors in the second
equation are aggregated based on the agents in the set $N_\idxi^{\OtoI}$ that
give score $r_h$ to agent $\idxi$.

Finally, the third factor turns out to be:
\begin{align}\label{eq:MAPthirdFactor}
~\Prob(\bm{\pTest}&_{N_\idxi^{\ItoO}} = \bm{\oTest}_{N_\idxi^{\ItoO}}|\pState_\idxi = c_\idxl) \nonumber\\
= &\prod_{h=1}^R \Big(\sum_{c_m=1}^C \Prob(\pState_\idxi = c_m)\nonumber\\
&\hspace{25pt}\times\Prob(\pTest_{\idxi\idxj_0} = r_h|\pState_\idxi = c_\idxl, \pState_{\idxj_0} = c_m)\Big)^{n_i^{\ItoO}(h)}.
\end{align}

Plugging together \eqref{eq:MAPfirstFactor}, \eqref{eq:MAPsecondFactor},
\eqref{eq:MAPthirdFactor} into \eqref{eq:MAPfactorization}, and then into
\eqref{eq:originalMAP}, the proof is complete.

\subsection{Proof of Proposition~\ref{prop:loglike_NR}}\label{app:loglike_NR}

We omit the dependency on $\btensor$ and $\bro$ for notational purposes.

Consider a factor $\Prob({\bm{\pTest}\!}_{N_\idxi^I} = {\bm{\oTest}\!}_{N_\idxi^I})$ of the node-based relaxed likelihood, with $\idxi$ an agent in the score graph. Marginalizing with respect to $\pState_\idxi$, we have
\begin{align}
\Prob({\bm{\pTest}\!}_{N_\idxi^I} = {\bm{\oTest}\!}_{N_\idxi^I}) = \sum_{\idxl =
  1}^C \Prob({\bm{\pTest}\!}_{N_\idxi^I} = {\bm{\oTest}\!}_{N_\idxi^I},
  \pState_\idxi = \vState_\idxl).\nonumber 
\end{align}
Now, applying the chain rule we obtain
\begin{align}
\Prob({\bm{\pTest}\!}_{N_\idxi^I} = {\bm{\oTest}\!}_{N_\idxi^I}) = \sum_{\idxl =
  1}^C \Prob(\pState_\idxi = \vState_\idxl)\Prob({\bm{\pTest}\!}_{N_\idxi^I} =
  {\bm{\oTest}\!}_{N_\idxi^I}|\pState_\idxi = \vState_\idxl). \nonumber
\end{align}
We are then ready to use Lemma \ref{lem:independency}, which implies that
\begin{align}
  \Prob({\bm{\pTest}\!}_{N_\idxi^I} = {\bm{\oTest}\!}_{N_\idxi^I}) = \sum_{\idxl
  = 1}^C \Prob(\pState_\idxi = \vState_\idxl)\!\prod_{\idxj\in
  N_\idxi^I}\!\Prob(Y_{\idxj\idxi} = y_{\idxj\idxi} | \pState_\idxi =
  \vState_\idxl).\nonumber
\end{align}
Recalling that $\pTest_{\idxi\idxj}, (\idxi,\idxj)\in\edges_\inter$ are identically distributed, we aggregate all the agents in $N_\idxi^I$ that give score $r_h$ to agent $\idxi$:
\begin{align}
\Prob({\bm{\pTest}\!}_{N_\idxi^I} \!=\! {\bm{\oTest}\!}_{N_\idxi^I}) \!=\! \sum_{\idxl = 1}^C \Prob(\pState_{\idxi} \!=\! \vState_\idxl)\prod_{h = 1}^R \Prob(Y_{\idxj_0\idxi} \!=\! r_h | \pState_\idxi \!=\! \vState_\idxl)^{n_i^{(h)}},\nonumber
\end{align}
and marginalize with respect to $\pState_{\idxj_0}$, thus obtaining
\begin{align}
&\Prob({\bm{\pTest}\!}_{N_\idxi^I} = {\bm{\oTest}\!}_{N_\idxi^I}) = \sum_{\idxl = 1}^C\! \Prob(X_i \!=\! \vState_\idxl)\nonumber\\
&\hspace{50pt}\times\prod_{h = 1}^R\! \Big(\!\sum_{m=1}^C\!\Prob(Y_{\idxj_0\idxi} \!=\! r_h, X_{j_0} \!=\! c_m | \pState_\idxi \!=\! \vState_\idxl)\Big)^{n_i^{(h)}}.\nonumber
\end{align}
From the structure of the score Bayesian network the following factorization is
obtained:
\begin{align}
\Prob({\bm{\pTest}\!}_{N_\idxi^I} \!=\! {\bm{\oTest}\!}_{N_\idxi^I}) \!&=\!
  \sum_{\idxl = 1}^C \Prob(\pState_{\idxi} \!=\! \vState_\idxl)\!\prod_{h = 1}^R\!
  \Big(\sum_{m=1}^C\Prob(\pState_{\idxj_0} \!=\! c_m)\nonumber\\
	&\hspace{10pt}\times\Prob(\pTest_{\idxj_0\idxi} \!=\! r_h | \pState_{\idxj_0} \!=\! c_m, \pState_{\idxi} \!=\! c_\idxl)\Big)^{n_i^{(h)}}. \nonumber
\end{align}

Then, the node-based relaxed likelihood $\LF_{NR}$ is the product over
$i=1,\ldots,N$ of the factors above, so that 
\begin{align}
\LF_{NR}(\btensor,\bro) = \prod_{\idxi = 1}^N \sum_{\idxl = 1}^C
  p_\idxl(\bro)\prod_{h = 1}^R \Big(\sum_{m=1}^C
  p_{\idxh|\idxm,\idxl}(\btensor)p_\idxm(\bro)\Big)^{n_i^{(h)}},\nonumber 
\end{align}
where we have used the shorthand notation introduced in \eqref{eq:p_l} and \eqref{eq:p_hlm}.

Applying the logarithm to the expression of $\LF_{NR}(\btensor,\bro)$ derived
above, it follows immediately that
$ \log(\LF_{NR}(\btensor,\bro)) = \sum_{\idxi=1}^{\nNodes}
g(\btensor,\bro;\bm{\nEdges}_i), $
with each $g(\btensor,\bro;\bm{\nEdges}_i)$ as in \eqref{eq:g}.  Finally,
since the logarithm is increasing, the maximum argument is invariant under this
transformation, thus concluding the proof.

\subsection{Proof of Proposition~\ref{prop:loglike_FR}}\label{app:loglike_FR}
Before proving Proposition~\ref{prop:loglike_FR} we give a useful lemma:
\begin{lemma}\label{lem:likelihood_explicit}
  The fully relaxed likelihood can
  be written as
\begin{equation*}
\LF_{FR}(\btensor, \bro) = \prod_{\idxh = 1}^\nTest
\Big(\sum_{\idxl,\idxm = 1}^\nState p_{\idxh|\idxl,\idxm}(\btensor)
p_{\idxl}(\bro)p_{\idxm}(\bro)\Big)^{\nEdges^{(\idxh)}},
\end{equation*}
where $\nEdges^{(\idxh)} := |\{(\idxi,\idxj)\in\edges_\inter : \oTest_{\idxi\idxj} = \vTest_\idxh\}|$.
\end{lemma}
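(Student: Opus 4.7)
The lemma is essentially a direct computation starting from the definition of $\LF_{FR}$, so my plan is to handle one edge factor at a time and then group terms.

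The plan is to start from
\[
\LF_{FR}({\bm{\oTest}\!}_{\edges_\inter};\btensor, \bro) = \prod_{(\idxi,\idxj)\in\edges_\inter} \Prob(\pTest_{\idxi\idxj} = \oTest_{\idxi\idxj}\, ; \,\btensor,\bro)
\]
and rewrite the single-edge marginal $\Prob(\pTest_{\idxi\idxj} = \oTest_{\idxi\idxj})$ in terms of the tensor $p_{\idxh|\idxl,\idxm}$ and the prior $p_\idxl$. First, for a fixed edge $(\idxi,\idxj)\in\edges_\inter$, I would marginalize over the two endpoint states $\pState_\idxi$ and $\pState_\idxj$, writing
\[
\Prob(\pTest_{\idxi\idxj} = \oTest_{\idxi\idxj}) = \sum_{\idxl,\idxm=1}^{\nState} \Prob(\pTest_{\idxi\idxj}=\oTest_{\idxi\idxj}, \pState_\idxi=\vState_\idxl,\pState_\idxj=\vState_\idxm).
\]

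Next I would apply the chain rule and invoke the factorization of the Score Bayesian Network: since the only parents of $\pTest_{\idxi\idxj}$ are $\pState_\idxi$ and $\pState_\idxj$, and since states are a priori independent, the joint splits as
\[
\Prob(\pTest_{\idxi\idxj}=\oTest_{\idxi\idxj}\mid \pState_\idxi=\vState_\idxl,\pState_\idxj=\vState_\idxm)\, p_\idxl(\bro)\, p_\idxm(\bro).
\]
Letting $\idxh_{\idxi\idxj}$ denote the index with $\oTest_{\idxi\idxj}=\vTest_{\idxh_{\idxi\idxj}}$, the single-edge marginal therefore equals $\sum_{\idxl,\idxm=1}^{\nState} p_{\idxh_{\idxi\idxj}\mid\idxl,\idxm}(\btensor)\, p_\idxl(\bro)\, p_\idxm(\bro)$.

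Finally I would regroup the product over edges according to the observed score: the factor above depends on the edge only through $\idxh_{\idxi\idxj}$, so collecting all edges whose score equals $\vTest_\idxh$ (there are exactly $\nEdges^{(\idxh)}$ of them by definition) yields
\[
\LF_{FR}(\btensor,\bro) = \prod_{\idxh=1}^{\nTest} \Bigl(\sum_{\idxl,\idxm=1}^{\nState} p_{\idxh\mid\idxl,\idxm}(\btensor)\, p_\idxl(\bro)\, p_\idxm(\bro)\Bigr)^{\nEdges^{(\idxh)}},
\]
which is the claimed expression. There is no real obstacle here; the only point to be careful about is the justification that the conditional $\Prob(\pTest_{\idxi\idxj}=\oTest_{\idxi\idxj}\mid \pState_\idxi,\pState_\idxj)$ depends only on $\btensor$ via $p_{\idxh\mid\idxl,\idxm}$ and not on the remaining states or scores. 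This is immediate from the modeling assumption in \eqref{eq:p_hlm}, so the computation is self-contained.
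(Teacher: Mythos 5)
Your proposal is correct and follows essentially the same route as the paper's proof: marginalize each single-edge probability over the two endpoint states, factorize via the Score Bayesian Network and the prior independence of states, and group edges by observed score value. The only (immaterial) difference is that the paper aggregates the identically distributed edge factors by score before marginalizing a single representative edge, whereas you marginalize first and group afterwards.
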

\begin{proof} 
In what follows, we omit the dependency on $\btensor$ and $\bro$. 
We start focusing our attention on the product
$\prod_{(\idxi,\idxj)\in\edges_\inter}\Prob(\pTest_{\idxi\idxj} =
\oTest_{\idxi\idxj})$,
which gives the fully relaxed likelihood. Recalling that
$\pTest_{\idxi\idxj}, (\idxi,\idxj)\in\edges_\inter$, are identically distributed,
and aggregating the edges $(\idxi,\idxj)$ for which $\oTest_{\idxi\idxj} = r_h$,
we obtain
\begin{align*}
\prod_{(\idxi,\idxj)\in\edges_\inter}\Prob(\pTest_{\idxi\idxj} =
  \oTest_{\idxi\idxj}) = \prod_{\idxh=1}^\nTest \Prob(\pTest_{\idxi_0\idxj_0} =
  \vTest_\idxh)^{n^{(\idxh)}}, 
\end{align*}
with $(\idxi_0,\idxj_0)$ being an arbitrary edge in $\edges_\inter$.
Then, marginalizing with respect to $\pState_{\idxi_0}$ and $\pState_{\idxj_0}$,
it follows that
\begin{align*}
\prod_{(\idxi,\idxj)\in\edges_\inter}&\Prob(\pTest_{\idxi\idxj} = \oTest_{\idxi\idxj})\\ 
=& \prod_{\idxh=1}^\nTest \Big(\sum_{\idxl,\idxm=1}^\nState \Prob(\pTest_{\idxi_0\idxj_0} \!=\! \vTest_\idxh, \pState_{\idxi_0} \!=\! \vState_\idxl, \pState_{\idxj_0} \!=\! \vState_{\idxm})\Big)^{n^{(\idxh)}}.
\end{align*}
Finally, exploiting again the structure of the Bayesian network, we have that
\begin{align*}
\prod_{(\idxi,\idxj)\in\edges_\inter}\!\!\!\Prob(\pTest_{\idxi\idxj} \!=\! \oTest_{\idxi\idxj}) &\!=\!
\prod_{\idxh=1}^\nTest \!\Big(\!\sum_{\idxl,\idxm=1}^\nState \Prob(\pState_{\idxi_0} \!=\! \vState_\idxl)\Prob(\pState_{\idxj_0} \!=\! \vState_\idxm)\nonumber\\
&\hspace{5pt}\times\Prob(\pTest_{\idxi_0\idxj_0} \!=\! \vTest_\idxh| \pState_{\idxi_0} \!=\! \vState_\idxl, \pState_{\idxj_0} \!=\! \vState_{\idxm})\Big)^{n^{(\idxh)}},
\end{align*}
so that the proof follows.
\end{proof}

Now we are ready to prove Proposition~\ref{prop:loglike_FR}. Since the logarithm
is a monotone transformation, it follows straight that
\begin{align}\label{eq:ML_log_pb_1}
(\bm{\hat{\tensor}}_{\text{\tiny FR}}, \bm{\hat{\ro}}_{\text{\tiny FR}}) &= \argmin_{(\bro,\btensor)\in \mathcal{S}_{\bro}\times\mathcal{S}_{\btensor}} -\frac{1}{\nEdges}\log(\LF_{FR}(\btensor, \bro)).
\end{align}
Using Lemma \ref{lem:likelihood_explicit}, we have that:
\begin{align}\label{eq:ML_log_pb_2}
-\frac{1}{\nEdges}\log(\LF_{FR}(\btensor, \bro)) &= \nonumber\\
- \sum_{\idxh = 1}^\nTest \frac{\nEdges^{(\idxh)}}{\nEdges}& \log\Big(\sum_{\idxl,\idxm = 1}^\nState p_{\idxh|\idxl,\idxm}(\btensor) p_{\idxl}(\bro)p_{\idxm}(\bro)\Big),
\end{align}
so that it is easy to show that
\begin{equation}\label{eq:ML_log_pb_3}
n^{(\idxh)} = \sum_{\idxi = 1}^\nNodes \nEdges_\idxi^{(\idxh)}.
\end{equation}
Plugging together \eqref{eq:ML_log_pb_1}, \eqref{eq:ML_log_pb_2}, and \eqref{eq:ML_log_pb_3}, the proof follows.

\subsection{Proof of Theorem~\ref{thm:constant_stepsize}}\label{app:constant_stepsize}
Before proving Theorem~\ref{thm:constant_stepsize}, we give two  Lemmas.

\begin{lemma}\label{lem:push_sum}
Let Assumption \ref{ass:connectivity} holds, and consider $\idxi\in\nodes$. Then
there exist $C > 0$, $d\in[0,1)$ and $T>0$, such that,
\begin{equation}\label{eq:phi_bound}
\|\bm{\phi} - \bm{\phi}_\idxi(\idxt)\| \leq d^\idxt C,
\end{equation}
for all $t\geq T$.
\end{lemma}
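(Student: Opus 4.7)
\textbf{Proof proposal for Lemma~\ref{lem:push_sum}.}
The plan is to cast the recursions in \eqref{eq:push_sum} as the action of a sequence of column--stochastic matrices, invoke the standard convergence rate for products of such matrices under the $Q$--strong connectivity of the communication graph, and then read off the geometric bound for the ratio $\bm\phi_i(t)$.

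First, define the matrix $A(t)\in\mathbb{R}^{\nNodes\times \nNodes}$ by $A_{ij}(t)=1/d_j(t)$ if $j\in\nNodes_{\comm,\idxi}^{I}(t)$ (equivalently, $i\in\nNodes_{\comm,\idxj}^{O}(t)$) and $A_{ij}(t)=0$ otherwise. Because $\nNodes_{\comm,\idxj}^{O}(t)$ always contains $j$ itself and has cardinality $d_j(t)$, each column of $A(t)$ sums to $1$, i.e.\ $A(t)$ is column--stochastic and has a positive diagonal. Stacking the scalar states into vectors $\bm{\xi}^{(\idxh)}(t)=(\xi_1^{(\idxh)}(t),\ldots,\xi_\nNodes^{(\idxh)}(t))^\top$ and $\bm{\eta}(t)=(\eta_1(t),\ldots,\eta_\nNodes(t))^\top$, the first two equations of \eqref{eq:push_sum} become $\bm{\xi}^{(\idxh)}(t+1)=A(t)\bm{\xi}^{(\idxh)}(t)$ and $\bm{\eta}(t+1)=A(t)\bm{\eta}(t)$. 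Column--stochasticity yields the mass--conservation identities $\mathbf{1}^\top\bm{\xi}^{(\idxh)}(t)=\sum_{\idxi}\nEdges_\idxi^{(\idxh)}=\nEdges\,\phi^{(\idxh)}$ and $\mathbf{1}^\top\bm{\eta}(t)=\sum_{\idxi}\nEdges_\idxi=\nEdges$ for every $t\ge 0$.

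Next, I would apply the by-now standard convergence result for products of column--stochastic matrices with positive diagonals under Assumption~\ref{ass:connectivity} (see, e.g., \cite{benezit2010weighted} and subsequent refinements). This result states that there exist constants $C_0>0$ and $\lambda\in[0,1)$, depending only on the network parameters $\nNodes$ and $Q$, such that for every $\idxi$, every $\idxh$, and every sufficiently large $t$,
\begin{equation*}
\Bigl|\xi_\idxi^{(\idxh)}(t)-\eta_\idxi(t)\,\phi^{(\idxh)}\Bigr|
=\Bigl|\xi_\idxi^{(\idxh)}(t)-\eta_\idxi(t)\,\tfrac{\mathbf{1}^\top\bm{\xi}^{(\idxh)}(0)}{\mathbf{1}^\top\bm{\eta}(0)}\Bigr|
\le \lambda^{t}\,C_0,
\end{equation*}
and moreover that $\eta_\idxi(t)$ is bounded below by a strictly positive constant $\eta_{\min}>0$ for all $t$ larger than some threshold $T_0$. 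The lower bound on $\eta_\idxi(t)$ is the crucial technical ingredient and is a direct consequence of the positive--diagonal, $Q$--strongly--connected structure (every node has a uniformly lower--bounded positive weight in the product $A(t-1)\cdots A(\tau)$ once $t-\tau$ exceeds a constant multiple of $\nNodes Q$).

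The conclusion is then immediate: dividing by $\eta_\idxi(t)\ge\eta_{\min}$ gives
\begin{equation*}
\bigl|\phi_\idxi^{(\idxh)}(t)-\phi^{(\idxh)}\bigr|
=\frac{1}{\eta_\idxi(t)}\Bigl|\xi_\idxi^{(\idxh)}(t)-\eta_\idxi(t)\phi^{(\idxh)}\Bigr|
\le \frac{C_0}{\eta_{\min}}\,\lambda^{t},
\end{equation*}
so summing (or taking an equivalent norm) over the $\nTest$ components yields \eqref{eq:phi_bound} with $d=\lambda$, a suitable constant $C$, and threshold $T=\max\{T_0,1\}$. The main obstacle here is not the algebraic manipulation but the quoted convergence estimate for the backward product $A(t-1)\cdots A(0)$: establishing both the geometric contraction rate $\lambda<1$ and the uniform positive lower bound on $\eta_\idxi(t)$ requires the careful combinatorial argument based on $Q$--strong connectivity and positive diagonals. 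Since this is a well-known result in the distributed averaging literature, I would invoke it directly rather than reprove it.
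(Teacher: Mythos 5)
Your proposal is correct and follows essentially the same route as the paper, whose entire proof of Lemma~\ref{lem:push_sum} consists of deferring to the push-sum analysis in \cite{nedic2015distributed}; you have simply made explicit the column-stochastic matrix-product formulation, the mass-conservation identities, and the uniform lower bound on $\eta_\idxi(\idxt)$ that that reference establishes. No gap: you correctly identify the lower bound on $\eta_\idxi(\idxt)$ as the one nontrivial ingredient and, like the authors, invoke the standard result rather than reproving it.
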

\begin{proof}
The proof follows the same steps as in
\cite{nedic2015distributed},  details are omitted for the sake of
conciseness.
\end{proof}

\begin{lemma}\label{lem:sequences}
Let $A_\idxt, B_\idxt, C_\idxt$ be three sequences such that $B_\idxt$ is nonnegative for all $\idxt$. Assume that
\[
A_{\idxt + 1} \leq A_\idxt - B_\idxt + C_\idxt,
\]
and that the series $\sum_{\idxt=0}^\infty C_\idxt$ converges. Then either $A_\idxt\to-\infty$ or else $A_\idxt$ converges to a finite value and $\sum_{\idxt = 0}^\infty B_\idxt < \infty$.
\end{lemma}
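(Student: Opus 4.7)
The plan is to reduce the claim to a standard monotone-sequence argument by introducing an auxiliary sequence that absorbs the perturbation $C_t$. The perturbation terms are the only obstruction to $A_t$ itself being monotone, so the natural move is to subtract off the running sum of $C_t$.

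Concretely, I would define
\[
\tilde{A}_t := A_t - \sum_{s=0}^{t-1} C_s,
\]
with the convention that the empty sum is zero. Using the hypothesis $A_{t+1}\le A_t - B_t + C_t$, a direct computation gives
\[
\tilde{A}_{t+1} = A_{t+1} - \sum_{s=0}^{t} C_s \le A_t - B_t + C_t - \sum_{s=0}^{t} C_s = \tilde{A}_t - B_t,
\]
and since $B_t\ge 0$ this shows $\tilde{A}_{t+1}\le \tilde{A}_t$. Hence $\{\tilde{A}_t\}$ is monotone non-increasing, so by the monotone convergence principle it either converges to a finite limit $\tilde{L}\in\mathbb{R}$ or diverges to $-\infty$.

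Next I would translate these two cases back to $A_t$. Since $\sum_{t=0}^\infty C_t$ converges to some finite $C^\star$, the partial sums $\sum_{s=0}^{t-1}C_s$ converge to $C^\star$. Therefore $A_t = \tilde{A}_t + \sum_{s=0}^{t-1}C_s$, and:
\begin{itemize}
\item if $\tilde{A}_t \to -\infty$, then $A_t\to -\infty$;
\item if $\tilde{A}_t\to \tilde{L}$, then $A_t\to \tilde{L}+C^\star$, which is finite.
\end{itemize}
Finally, to obtain $\sum_t B_t<\infty$ in the latter case, I would telescope the inequality $\tilde{A}_{t+1}\le \tilde{A}_t - B_t$ to get
\[
\sum_{s=0}^{t} B_s \le \tilde{A}_0 - \tilde{A}_{t+1},
\]
and let $t\to\infty$; the right-hand side tends to $\tilde{A}_0 - \tilde{L} <\infty$, so the nonnegative series $\sum_t B_t$ converges.

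There is no genuine obstacle here: the whole argument hinges on picking the right auxiliary sequence so that the $C_t$ terms cancel and monotonicity kicks in. The only mildly delicate point is that we do not assume absolute convergence of $\sum C_t$, only plain convergence; but this is harmless because the auxiliary sequence uses partial sums of $C_t$, and the convergence of the series is precisely the statement that these partial sums have a finite limit, which is all we need to transfer the dichotomy from $\tilde{A}_t$ back to $A_t$.
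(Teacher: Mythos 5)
Your proof is correct and complete: subtracting the partial sums of $C_t$ yields a non-increasing auxiliary sequence, the convergence of $\sum_t C_t$ transfers the resulting dichotomy back to $A_t$, and telescoping gives $\sum_t B_t<\infty$ in the convergent case. The paper does not prove this lemma itself but merely cites Lemma~1 of Bertsekas and Tsitsiklis, and your argument is precisely the standard proof of that cited result, so nothing further is needed.
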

\begin{proof}
For the proof see Lemma 1 in \cite{bertsekas2000gradient}.
\end{proof}

We now prove Theorem~\ref{thm:constant_stepsize}.
For notational purposes, we define:
\begin{align}
z_\idxi^{\idxt} &:= (\bm{\hat{\tensor}}_\idxi(\idxt), \bm{\hat{\ro}}_\idxi(\idxt)),\nonumber\\
\quad\overline{z}_\idxi^{\idxt} &:= [z_\idxi^\idxt - \alpha\bm{\phi}_i(\idxt)^\top\nabla\bm{g}(z_\idxi^\idxt)]^+,\label{eq:constant_stepsize_n3}\\
g_\phi(\btensor, \bro) &:=
\bm{\phi}^\top\bm{g}(\btensor, \bro).\nonumber
\end{align}

Let $\hat{z}_\idxi$ be a limit-point of $\{z_\idxi^\idxt\}_{\idxt\ge0}$; taking
into account that the stepsize is constant, our strategy is to prove that
$\hat{z}_\idxi$ is a stationary point of $g_\phi$ over
$\mathcal{S}_{\btensor}\times\mathcal{S}_{\bro}$ by showing that $\hat{z}_\idxi$
is a fixed point of the projected gradient method related to the objective
function $g_\phi$ and to the feasible set
$\mathcal{S}_{\btensor}\times\mathcal{S}_{\bro}$, i.e.:
\begin{equation}\label{eq:goal}
\hat{z}_\idxi = [\hat{z}_\idxi - \alpha\nabla g_\phi(\hat{z}_\idxi)]^+.
\end{equation}

Clearly $z_\idxi^{\idxt + 1} = \overline{z}_\idxi^\idxt$, thus we have:
\begin{equation}\label{eq:constant_stepsize_2}
g_\phi(z_\idxi^{\idxt + 1}) - g_\phi(z_\idxi^\idxt) = g_\phi(\overline{z}_\idxi^\idxt) - g_\phi(z_\idxi^\idxt),
\end{equation}
moreover, using the fact that $\nabla g_\phi$ is Lipschitz continuous with constant $L > 0$, it follows that:
\begin{equation}\label{eq:constant_stepsize_3}
g_\phi(\overline{z}_\idxi^\idxt) - g_\phi(z_\idxi^\idxt) \le \nabla g_\phi(z_\idxi^\idxt)^\top(\overline{z}_\idxi^\idxt - z_\idxi^\idxt) + \frac{L}{2}\|\overline{z}_\idxi^\idxt - z_\idxi^\idxt\|^2.
\end{equation}
Simple calculations show that
\begin{equation}\label{eq:constant_stepsize_4}
\nabla g_\phi(z_\idxi^\idxt) = \bm{\phi}_\idxi(\idxt)^\top\nabla\bm{g}(z_\idxi^\idxt) + (\bm{\phi} - \bm{\phi}_\idxi(\idxt))^\top\nabla\bm{g}(z_\idxi^\idxt) ,
\end{equation}
where $\nabla\bm{g}(z_\idxi^\idxt)$ is the Jacobian of $\bm{g}$ at
$z_\idxi^\idxt$. 
Plugging together equations \eqref{eq:constant_stepsize_2}, \eqref{eq:constant_stepsize_3}, and \eqref{eq:constant_stepsize_4} we obtain:
\begin{align}\label{eq:constant_stepsize_5}
g_\phi(z_\idxi^{\idxt + 1}) - &g_\phi(z_\idxi^\idxt) \le \bm{\phi}_\idxi(\idxt)^\top\nabla\bm{g}(z_\idxi^\idxt)(\overline{z}_\idxi^\idxt - z_\idxi^\idxt)\nonumber\\
+ \frac{L}{2}\|\overline{z}_\idxi^\idxt& - z_\idxi^\idxt\|^2 + (\bm{\phi} - \bm{\phi}_\idxi(t))^\top \nabla\bm{g}(z_\idxi^\idxt)(\overline{z}_\idxi^\idxt - z_\idxi^\idxt).
\end{align}
We know that $\overline{z}_\idxi^\idxt$ is the projection of $z_\idxi^\idxt - \alpha\bm{\phi}_\idxi(\idxt)^\top\nabla\bm{g}(z_\idxi^\idxt)$ onto the set $\mathcal{S}_{\bro}\times\mathcal{S}_{\btensor}$, therefore:
\[
(z_\idxi^\idxt - \alpha\bm{\phi}_\idxi(\idxt)^\top\nabla\bm{g}(z_\idxi^\idxt)
- \overline{z}_\idxi^\idxt)^\top(z - \overline{z}_\idxi^\idxt) \le 0,\qquad
z\in\mathcal{S}_{\bro}\times\mathcal{S}_{\btensor}.
\]
In particular, for $z = z_\idxi^\idxt$, remembering that $ \alpha>0$, we have:
\begin{equation}\label{eq:constant_stepsize_6}
\bm{\phi}_\idxi(\idxt)^\top\nabla\bm{g}(z_\idxi^\idxt)(\overline{z}_\idxi^\idxt - z_\idxi^\idxt) \leq -\frac{1}{\alpha}\|\overline{z}_\idxi^\idxt - z_\idxi^\idxt\|^2.
\end{equation}
From the boundedness of $\nabla\bm{g}$ and from equation \eqref{eq:phi_bound},
$\nabla g_\phi$ is bounded. Moreover, from Lemma \ref{lem:push_sum},
$\bm{\phi}_\idxi(\idxt)$ converges exponentially fast to $\bm{\phi}$. Thus,
since by assumption the feasible set is bounded, there exist $C > 0$ and
$d\in[0,1)$ such that
\begin{equation}
(\bm{\phi} - \bm{\phi}_\idxi(t))^\top
\nabla\bm{g}(z_\idxi^\idxt)(\overline{z}_\idxi^\idxt - z_\idxi^\idxt) \leq
d^\idxt C.
\label{eq:bound_grad_prod}
\end{equation}
Now, let $G$ be the following auxiliary function:
\[
G(t) := g_\phi(z_\idxi^{\idxt + 1}) - g_\phi(z_\idxi^\idxt) - d^\idxt C,
\]
combining equation \eqref{eq:bound_grad_prod} with
\eqref{eq:constant_stepsize_5} and \eqref{eq:constant_stepsize_6}, and recalling
that $0 < \alpha < \frac{2}{L}$, we have:
\begin{equation}\label{eq:constant_stepsize_8}
G(t) \le \Big(\frac{L}{2} - \frac{1}{\alpha}\Big)\|\overline{z}_\idxi^\idxt - z_\idxi^\idxt\|^2 \le 0,
\end{equation}
In particular, we have:
\[
g_\phi(z_\idxi^{\idxt + 1}) \le g_\phi(z_\idxi^\idxt) + d^\idxt C,
\]
so that, applying Lemma \ref{lem:sequences} with the sequences $A_\idxt =
g_\phi(z_\idxi^\idxt)$, $B_\idxt = 0$ and $C_\idxt = d^\idxt C$, and recalling that
the feasible set is bounded, we have that $g_\phi(z_\idxi^\idxt)$ converges to a finite value. Therefore:
\begin{equation}\label{eq:constant_stepsize_n1}
\lim_{\idxt\to\infty} G(\idxt) = 0.
\end{equation}
We have supposed that $\hat{z}_\idxi$ is a limit point of $\{z_\idxi^\idxt\}_{\idxt\ge0}$, thus:
\begin{equation}\label{eq:constant_stepsize_9}
\lim_{k\to\infty}z_\idxi^{\idxt_k} = \hat{z}_\idxi.
\end{equation}
Combining equations \eqref{eq:constant_stepsize_8}, \eqref{eq:constant_stepsize_n1} and \eqref{eq:constant_stepsize_9}, it follows that 
\begin{equation}\label{eq:constant_stepsize_n2}
\lim_{k\to\infty} \overline{z}_\idxi^{\idxt_k} = \hat{z}_\idxi.
\end{equation}
By hypothesis $\nabla \bm{g}$ is continuous; moreover, $[\cdot]^+$ is non-expansive, thus continuous. Using these facts, from Lemma \ref{lem:push_sum} and equations \eqref{eq:constant_stepsize_n3}, \eqref{eq:constant_stepsize_9}, we can conclude:
\begin{equation*}
\lim_{k\to\infty}\overline{z}_\idxi^{\idxt_k} = [\hat{z}_\idxi - \alpha\nabla g_\phi(\hat{z}_\idxi)]^+.
\end{equation*}
that plugged into \eqref{eq:constant_stepsize_n2} gives \eqref{eq:goal}, thus
concluding the proof.

\end{document}